\documentclass[a4paper,12pt,intlimits,oneside]{amsart}
\usepackage{graphicx}
\usepackage{float}

\oddsidemargin  0.0in \evensidemargin 0.0in \textwidth      6.5in
\headheight     0.0in \topmargin      0.0in \textheight=9.5in


\usepackage{enumerate}
\usepackage{amsfonts}
\usepackage{amsmath,amsthm,amssymb}

\newcommand{\comment}[1]{}

\newtheorem{theorem}{Theorem}

\newtheorem{proposition}[theorem]{Proposition}
\newtheorem{lemma}[theorem]{Lemma}

\newtheorem{remark}[theorem]{Remark}
\newtheorem{conjecture}[theorem]{Conjecture}

\newcommand\de{\delta}

\newcommand\dbj{\overline{d}_j}

\newcommand{\NN}{\mathbb N}
\newcommand{\ZZ}{\mathbb Z}
\newcommand{\RR}{\mathbb R}

\newcommand{\TT}{\mathbb T}






\newcounter{reb}
\setcounter{reb}{0}

\newcounter{rev}
\setcounter{rev}{0}

\newcounter{rem}
\setcounter{rem}{0}

\newcounter{rec}
\setcounter{rec}{0}

\begin{document}

\title[Counterexamples in the
Hardy-Littlewood majorant problem]{Three-term idempotent counterexamples in the
Hardy-Littlewood majorant problem}

\thanks{}

\author{S\'andor Krenedits}\thanks{Supported in part by the Hungarian National
Foundation for Scientific Research, Project \# K-81658.}

\date{\today}


\maketitle

\begin{abstract} The Hardy-Littlewood majorant problem was raised in the 30's and it can be formulated as the question whether $\int |f|^p\ge \int|g|^p$ whenever $\widehat{f}\ge|\widehat g|$. It has a
positive answer only for exponents $p$ which are even integers.
Montgomery conjectured that even among the idempotent polynomials
there must exist some counterexamples, i.e. there exists some
finite set of exponentials and some $\pm$ signs with which the
signed exponential sum has larger $p^{\rm th}$ norm than the
idempotent obtained with all the signs chosen $+$ in the
exponential sum. That conjecture was proved recently by
Mockenhaupt and Schlag. \comment{Their construction was used by
Bonami and R\'ev\'esz to find analogous examples among bivariate
idempotents, which were in turn used to show integral
concentration properties of univariate idempotents.}However, a
natural question is if even the classical $1+e^{2\pi i x} \pm
e^{2\pi i (k+2)x}$ three-term exponential sums, used for $p=3$ and
$k=1$ already by Hardy and Littlewood, should work in this
respect. That remained unproved, as the construction of
Mockenhaupt and Schlag works with four-term idempotents. We
investigate the sharpened question and show that at least in
certain cases there indeed exist three-term idempotent
counterexamples in the Hardy-Littlewood majorant problem; that is we have
for $0<p<6,  p \notin 2\NN$ $\int_0^{\frac12}|1+e^{2\pi ix}-e^{2\pi
i([\frac p2]+2)x}|^p > \int_0^{\frac12}|1+e^{2\pi ix}+e^{2\pi
i([\frac p2]+2)x}|^p$. The proof combines delicate calculus with numerical integration and precise error estimates.
\end{abstract}

\maketitle \vskip1em \noindent{\small \textbf{Mathematics Subject
Classification (2000):} Primary 42A05. \\[1em]
\textbf{Keywords:} idempotent exponential polynomials, Hardy-Littlewood majorant problem, Montgomery conjecture, concave functions, Riemann sums approximation, Taylor polynomials.}



\section{Introduction}\label{sec:intro}

We denote, as usual, $\TT:=\RR/2\pi\ZZ$ the one dimensional torus
or circle group. Following Hardy and Litlewood \cite{HL}, $f$ is
said to be a majorant to $g$ if $|\widehat{g}|\leq \widehat{f}$.
Obviously, then $f$ is necessarily a positive definite function.
The (upper) majorization property (with constant 1) is the
statement that whenever  $f\in L^p(\TT)$ is a majorant of $g\in
L^p(\TT)$, then $\|g\|_p\leq \|f\|_p$. Hardy and Littlewood proved
this for all $p\in 2\NN$ -- this being an easy consequence of the
Parseval identity. On the other hand Hardy and Littlewood
observed that this fails for $p=3$. Indeed, they took
$f=1+e_1+e_3$ and $g=1-e_1+e_3$ (where here and in the sequel we
denote $e_k(x):=e(kx)$ and $e(t):=e^{2\pi i t}$, as usual) and
calculated that $\|f\|_3<\|g\|_3$.

The failure of the majorization property for $p\notin 2\NN$ was
shown by Boas \cite{Boas}. Boas' construction exploits Taylor
series expansion around zero: for $2k<p<2k+2$ the counterexample
is provided by the polynomials $f,g:=1+ re_1\pm r^{k+2}e_{k+2}$,
with $r$ sufficiently small to make the effect of the first terms
dominant over later, larger powers of $r$.

Utilizing an idea of Y. Katznelson, Bachelis proved \cite{Bac} the
failure of the majorization property for any $p\notin 2\NN$ even
with arbitrarily large constants. That is, not even $\|g\|_p < C_p
\|f\|_p$ holds with some fixed constant $C=C_p$.

For further comments and similar results in other groups see
\cite{Fou, Moc} .

Montgomery conjectured that the majorant property for $p\notin
2\NN$ fails also if we restrict to \emph{idempotent} majorants,
see \cite[p. 144]{Mon}. (A measure on an integrable function is
idempotent if its convolution square is itself: that is, if its
Fourier coefficients are either 0 or 1.) This has been recently
proved by Mockenhaupt and Schlag in \cite{MS}.

\begin{theorem}[{\bf Mockenhaupt \& Schlag}]\label{th:MMS} Let
$p>2$ and $p\notin 2\NN$, and let $k>p/2$ be arbitrary. Then for
the trigonometric polynomials $g:=(1+e_k)(1-e_{k+1})$ and
$f:=(1+e_k)(1+e_{k+1})$ we have $\|g\|_p >\|f\|_p$.
\end{theorem}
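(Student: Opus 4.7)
Because $|1+e_n(x)| = 2|\cos(\pi nx)|$ and $|1-e_n(x)| = 2|\sin(\pi nx)|$, the factored form yields
\[
|f(x)|^p = 4^p |\cos(\pi kx)|^p |\cos(\pi(k+1)x)|^p, \qquad
|g(x)|^p = 4^p |\cos(\pi kx)|^p |\sin(\pi(k+1)x)|^p,
\]
so $\|g\|_p^p - \|f\|_p^p = 4^p I_k$, where
\[
I_k := \int_0^1 |\cos(\pi kx)|^p\,\Phi\bigl(\pi(k+1)x\bigr)\,dx, \qquad \Phi(y):=|\sin y|^p - |\cos y|^p.
\]
The problem is thereby reduced to proving $I_k > 0$, a real integral inequality in which both factors are $\pi$-periodic.

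I would then expand $|\cos y|^p = \sum_{n\in\ZZ} a_n e^{2iny}$ and $\Phi(y) = \sum_{n\in\ZZ} b_n e^{2iny}$; the antisymmetry $\Phi(y+\pi/2) = -\Phi(y)$ forces $b_n = 0$ for $n$ even and $b_n = -2a_n$ for $n$ odd. Multiplying the two expansions and integrating over $[0,1]$, only frequencies with $nk + m(k+1) = 0$ survive. Since $\gcd(k,k+1)=1$, these are $n=(k+1)j$, $m=-kj$ for $j\in\ZZ$, and so
\[
I_k = -2\sum_{\substack{j\in\ZZ\\ kj\ \text{odd}}} a_{(k+1)j}\,a_{kj}.
\]
For $k$ even the constraint is never satisfied and $I_k = 0$ (equivalently, the shift $x\mapsto x+\tfrac12$ takes the integrand to its negative), so the inequality can be strict only for $k$ odd, where $j$ ranges over odd integers; using $a_{-n}=a_n$, the sum pairs up as $I_k = -4\sum_{j\ge1,\ j\ \text{odd}} a_{(k+1)j}\,a_{kj}$.

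The proof is completed by sign analysis. The classical gamma-function identity
\[
a_n = \frac{\Gamma(p+1)}{2^p\,\Gamma(p/2+n+1)\,\Gamma(p/2-n+1)},
\]
combined with the reflection formula, rewrites $a_n = (-1)^{n+1}\sin(\pi p/2)\,D_p\,\Gamma(n-p/2)/\Gamma(n+p/2+1)$ for $n > p/2$, with $D_p > 0$ depending only on $p$. The gamma quotient is positive in that range, so $\operatorname{sgn}(a_n) = (-1)^{n+1}\operatorname{sgn}(\sin(\pi p/2))$; for odd $j$ and odd $k > p/2$ this gives
\[
\operatorname{sgn}\bigl(a_{(k+1)j}\,a_{kj}\bigr) = (-1)^{(2k+1)j}\,\operatorname{sgn}\bigl(\sin^2(\pi p/2)\bigr) = -1,
\]
so every summand is strictly negative and therefore $I_k > 0$, with no cancellation control or tail estimate required. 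The only genuine technical point in the whole argument is the Fourier identity for $a_n$ together with the positivity of $\Gamma(n-p/2)$ at the indices appearing -- both guaranteed by the hypothesis $k > p/2$ and $p\notin 2\NN$. This surprising cleanliness traces directly to the multiplicative four-term structure of $f$ and $g$; losing the product structure, as one does for the Hardy--Littlewood three-term polynomials treated in the present paper, is precisely what makes the sharper problem substantially more delicate.
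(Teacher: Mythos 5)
The paper does not prove Theorem~\ref{th:MMS}; it is stated as a citation of Mockenhaupt and Schlag \cite{MS}, so there is no in-paper argument to compare yours against. Evaluated on its own, your argument is essentially correct and quite clean, but it proves a slightly different statement than the one transcribed here, and that discrepancy is worth flagging.

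The reduction $|f|^p=4^p|\cos(\pi kx)|^p|\cos(\pi(k+1)x)|^p$, $|g|^p=4^p|\cos(\pi kx)|^p|\sin(\pi(k+1)x)|^p$ is right, and the Parseval computation is legitimate: both factors are bounded and $\pi$-periodic, $\gcd(k,k+1)=1$ picks out exactly the frequencies $n=(k+1)j$, $m=-kj$, the coefficients $a_n$ of $|\cos|^p$ decay like $|n|^{-p-1}$ so the rearrangements converge absolutely, and the antisymmetry $\Phi(y+\tfrac{\pi}{2})=-\Phi(y)$ correctly yields $b_n=-2a_n$ for odd $n$ and $b_n=0$ for even $n$. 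The gamma-function formula for $a_n$, the reflection-formula rewrite, and the resulting sign $\operatorname{sgn}(a_n)=(-1)^{n+1}\operatorname{sgn}\sin(\pi p/2)$ for $n>p/2$ are all classical and applied correctly; the hypothesis $k>p/2$ is used exactly where needed, to ensure every index $kj,(k+1)j$ with $j\ge1$ lies beyond $p/2$ so the gamma quotient is strictly positive (and $p\notin 2\NN$ ensures $\sin(\pi p/2)\ne0$, so no term vanishes). Thus for $k$ odd every summand in $I_k=-4\sum_{j\ge1,\ j\ \mathrm{odd}}a_{(k+1)j}a_{kj}$ is strictly negative and $I_k>0$.

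What you state in passing should be emphasized, because it is more than a remark about your method: for $k$ even you get $I_k=0$ \emph{exactly}, and indeed the substitution $x\mapsto x+\tfrac12$ carries $g$ onto (a unimodular multiple of) $f$, so $\|g\|_p=\|f\|_p$. The strict inequality claimed in Theorem~\ref{th:MMS} therefore fails for every even $k$. The hypothesis ``let $k>p/2$ be arbitrary'' is too broad as written; the correct statement must require $k$ odd (or equivalently replace $g$ by a form of the four-term idempotent that is not a translate of $f$). Your proof is thus not incomplete — it is a complete proof of the corrected statement together with a one-line disproof of the statement as literally given, and both halves are worth recording explicitly.
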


The quite nice, constructive example is given with a four-term
idempotent polynomial, although trinomials may seem simpler
objects to study. Indeed, there is a considerable
knowledge, even if usually for the maximum norm, on the space of
trinomials, see e.g. \cite{Cheb, Mini, Neu}. Note that striving for three-term examples is the absolute simplest we can ask for, as two-term polynomials can never exhibit failure of the majorization property.

In the construction of Mockenhaupt and Schlag, however, the key
role is played by the fact that the given 4-term idempotent
decomposes as the product of two two-term idempotents, which then
can be expressed by the usual trigonometric and hyperbolic
functions. So even if four term idempotents \emph{in general} are more complicated, than three term idempotents, but the \emph{particular product form} simplifies the analysis a great deal and gives way to a manageable calculation.

Nevertheless, one may feel that Boas' idea, i.e. the idea of
cancellation in the $(k+1)^{\rm st}$ Fourier coefficients works
even if $r$ is not that small -- perhaps even if $r=1$. The
difficulty here is that the binomial series expansion diverges,
and we have no explicit way to control the interplay of the
various terms occurring with the $\pm$ signed versions of our
polynomials. But at least there is one instance, the case of
$p=3$, when all this is explicitly known: already Hardy and
Littlewood \cite{HL} observed that failure of the majorant
property for $p=3$ is exhibited already by the pair of idempotents
$1+e_1\pm e_3$. In fact, this idempotent example led Montgomery to
express (in a vague form, however, see \cite{Mon}, p. 144) his conjecture on existence of \emph{idempotent} counterexamples.

There has been a number of attempts on the Montgomery problem. In
particular, Mockenhaupt has already addressed it fifteen years ago, see
\cite{Moc}, page 2 line 15. Moreover, that time Mockenhaupt worked in the range $2<p<4$ and exactly with the polynomials $1+e_1\pm e_3$, see also his footnote on p. 32. This attempt is based on an inequality (a discrete and uniform version of the inequality obtained by Hardy
and Littlewood only for the continuous case and $p=3$), which appears in Example 3.4 on p. 33 of \cite{Moc}, with a comment that "This lower bound is established by numerical calculations".

However, there is no convincing argument which would show that this hypothetical inequality would hold for all $p$, and so this preliminary attempt does not lead to a proof. In any case, we may say that Mockenhaupt expressed his view that $1+e_1\pm e_{k+2}$, where $2k<p<2k+2$, should provide a counterexample in the Hardy-Littlewood majorant problem, (at least for $k=1,2$). Our first aim is to analyze this question and execute proper numerical analysis to support this conjecture. In particular, we prove the assertion for $k=0,1,2$, justifying at least the cases which were concretely addressed by Mockenhaupt.

One motivation for us was the recent paper of Bonami and R\'ev\'esz \cite{AJM}.
In this breakthrough paper the authors settle a number of
questions regarding concentration of $p^{\rm th}$ integrals of
idempotents. In particular, they disprove a conjecture of
Anderson, Ash, Jones, Rider and Saffari, \cite{CRMany, Many} who
disbelieved concentration of idempotents for $p=1$. Also they
prove maximal concentration for all $p>0$ not an even integer (for
arbitrarily small open symmetric sets). Key to the construction of Bonami and R\'ev\'esz was the idea of constructing bivariate idempotents having special properties, related closely to the Hardy-Littlewood majorant problem. For details we refer to \cite{AJM}. It is also possible that their construction can be made simpler (work with less terms) by use of our methods here. To this question we hope to return in a later work.

The problem of idempotent polynomial concentration has its roots
in the analysis of weak-(2,2) type operators. For an account of
the topic from the origins to the present state of knowledge see
\cite{Ash1, Ash2}; see also \cite{Cow} for operator related matters and
\cite{DPQ, DPQ2, CRMany, Many} for development of the theme. Further questions of Wiener and Zygmund, which could be settled by the current strong results and methods of idempotent concentration, are discussed in \cite{CRAS}.

Relevance of idempotents can be well understood by the fact that
whenever a convolution operator represents a projection to a
finite dimensional translation-invariant subspace $H$ of say
$L^1(\TT)$, then $H$ is spanned by the exponentials in it, and
forming the idempotent $P_H:=\sum_{e_k\in H} e_k$ gives the
convolution kernel for the projection operator: $\Pi:
L^1(\TT)\rightarrow H$ is given by $\Pi f = f\star P_H$. In
particular, the Fourier partial sums operator $S_n$ is defined by
the Dirichlet kernel $D_n$ as convolution kernel. For more on this and the related famous Littlewood problem see e.g. \cite{Kony} and \cite{MPS}.

As already hinted by Mockenhaupt's thesis \cite{Moc}, proving that $1+e(x)\pm (e(k+2)x)$ would be a counterexample in the Hardy-Littlewood majorant problem may require some numerical analysis as well. However, we do not -- as we cannot -- pursue the numerical calculations outlined in \cite{Moc}. Instead, we do function calculus and support our analysis by numerical integration and error estimates where necessary. We are to discuss the following reasonably
documented conjecture.

\begin{conjecture}[]\label{conj:Moc} For all $p$ not an even integer, there are three-term idempotent counterexamples in the Hardy-Littlewood majorant problem.
\end{conjecture}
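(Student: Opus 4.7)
The plan is to take the natural candidate pair advocated by Hardy-Littlewood and Mockenhaupt: for $p\in(2k,2k+2)\setminus 2\NN$ with $k:=\lfloor p/2\rfloor$, set
\[ f_k(x):=1+e_1(x)+e_{k+2}(x),\qquad g_k(x):=1+e_1(x)-e_{k+2}(x). \]
Then $f_k$ is a three-term idempotent majorant of $g_k$, and the evenness of both moduli reduces the task to proving
\[ D(p,k):=\int_0^{1/2}\bigl(|g_k(x)|^p-|f_k(x)|^p\bigr)\,dx>0 \]
for every admissible $(p,k)$. A direct calculation yields
\[ |f_k|^2=3+2\cos 2\pi x+2\cos 2\pi(k+1)x+2\cos 2\pi(k+2)x, \]
\[ |g_k|^2=3+2\cos 2\pi x-2\cos 2\pi(k+1)x-2\cos 2\pi(k+2)x. \]
Thus $|f_k|^2$ has a sharp peak of value $9$ at the origin, while $|g_k|^2$ is flatter with strictly smaller maximum; the quantitative Boas contest between these two shapes raised to the fractional power $p/2$ is what drives the sign of $D(p,k)$.

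The concrete strategy is to dissect $[0,1/2]$ into a small $\delta$-neighborhood of $x=0$ (where $|f_k|^p$ is large and one handles $|g_k|^p-|f_k|^p$ analytically via a Taylor expansion in $x$ with Lagrange remainder) and the complementary interval $[\delta,1/2]$ (where both moduli are moderate and one applies a rigorous composite quadrature rule, typically Simpson-type, whose error one bounds by an explicit fourth-derivative estimate of the integrand). The calibration of $\delta$ is delicate: small enough for the Taylor main term to be sharp, yet large enough that the residual quadrature mass cannot overturn the sign. A crucial structural input is that $D(p,k)$ vanishes at every even integer $p\in\{2,4,\ldots,2k+2\}$: writing $m=p/2\le k+1$, the equation $j=a+c(k+2)$ has a unique solution $(a,c)\ge 0$ with $a+c\le m$, forcing $|\widehat{f_k^m}(j)|=|\widehat{g_k^m}(j)|$ for every $j$ and hence, by Parseval, $\|f_k^m\|_2=\|g_k^m\|_2$. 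Consequently $D(\cdot,k)$ vanishes at both endpoints of $(2k,2k+2)$, so one only has to pin down its sign strictly inside, with the classical Hardy-Littlewood inequality at $p=3$ serving as a sanity check. For $k\in\{0,1,2\}$, i.e.\ $0<p<6$, every relevant constant, the width $\delta$, and the derivative bounds are of moderate size, and a combination of calculus and certified arithmetic closes the sign; this is the range announced in the abstract.

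The main obstacle to extending this strategy to arbitrary $k$ is uniformity. The relevant Boas scale near $x=0$ is $\delta\sim 1/(k+2)$, so the captured Taylor mass shrinks in $k$, while the tail oscillates at frequency $k+2$, forcing a quadrature mesh of size $o(1/(k+2))$ with derivative bounds that grow polynomially in $k$. A uniform attack would rescale via $y=(k+2)x$ to obtain a $k$-independent limiting integrand dominated by the top-frequency cosine, proving a single fixed positivity inequality plus controllable $O(1/k)$ perturbations. A more structural alternative would leverage the vanishing of $D(\cdot,k)$ at the $k+1$ consecutive even integers $2,4,\ldots,2k+2$ together with a convexity or interpolation argument in $p$ to deduce positivity on $(2k,2k+2)$ for free; finding such a clean argument seems to be the essential difficulty standing between the three low cases $k=0,1,2$ and the full conjecture.
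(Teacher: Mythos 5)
Your candidate pair $f_k,g_k$ and the Parseval argument for the vanishing of $D(\cdot,k)$ at the even integers $2,4,\dots,2k+2$ match the paper exactly; so does the resulting observation that the endpoints of $(2k,2k+2)$ are zeros of $D(\cdot,k)$. After that, however, your strategy and the paper's diverge fundamentally, and there is a genuine gap in yours.

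You propose an $x$-space decomposition: Taylor in $x$ near the peak at $x=0$, Simpson-type quadrature with a fourth-derivative error bound on $[\delta,1/2]$. The paper instead fixes the integration to the whole $[0,1/2]$ with a single midpoint rule (and a second-derivative error bound), and puts all the analytic work into the \emph{$t$-variable}, $t=p/2$. Writing $d(t)=\int_0^{1/2}\bigl[G_-^t-G_+^t\bigr]dx$, the paper proves for $k=1$ that $d(1)=d(2)=0$, $d'(1)>0$, and $d'''(t)<0$ on $[1,2]$; for $k=2$ it proves $d(2)=d(3)=0$, $d'(2)>0$, $d''(2)>0$, and $d^{(4)}(t)<0$ on $[2,3]$. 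The uniform negativity of the high derivative is established by an approximate Taylor polynomial of $d'''$ (respectively $d^{(4)}$) centered at the midpoint $t_0$, with both truncation error and Riemann-sum quadrature error rigorously bounded. (For $k=0$ the paper avoids numerics altogether, via a bivariate monotonicity argument.)

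This $t$-space mechanism is precisely what is missing from your proposal. Certified numerical integration at individual values of $p$ can never, by itself, establish $D(p,k)>0$ for \emph{all} $p$ in the open interval; you need some structural statement that turns finitely many numerical checks into a sign on a continuum, and your sketch offers none. You gesture at ``convexity or interpolation in $p$'' only as a hypothetical alternative for the general-$k$ case, but even for $k=1,2$ you would need exactly such a device. The paper's derivative-sign argument (Rolle plus concavity of $d'''$ or $d^{(4)}$) is that device, and it is what reduces the problem to a finite list of certified computations: the endpoint derivative values and the Taylor coefficients $d^{(j+3)}(3/2)$ (resp.\ $d^{(j+4)}(5/2)$) up to $j=7$.

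Two smaller remarks. First, the statement you are asked to prove is labelled a conjecture for good reason: neither the paper nor your proposal proves it for all non-even $p$; the paper establishes only $0<p<6$ ($k=0,1,2$), and you acknowledge the same limitation. Second, your $[0,\delta]$ versus $[\delta,1/2]$ split is an unnecessary complication at small $k$: the integrand is globally smooth (the nonnegative trigonometric polynomials $G_\pm$ never vanish), and for $k\le 2$ the paper shows a uniform mesh of a few hundred nodes suffices once the work is moved into the $t$-variable; the peak-splitting would only become relevant if one tried to push $k\to\infty$, which neither of you does.
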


In fact, we address the more concrete form, going back to the examples of Hardy-Littlewood and Boas and discussed also by Mockenhaupt \cite{Moc}.

\begin{conjecture}\label{conj:con3} Let $2k<p<2k+2$, where $k\in \NN$ arbitrary.
Then the three-term idempotent polynomial $P_k:=1+e_1+e_{k+2}$ has
smaller $p$-norm than $Q_k:=1+e_1-e_{k+2}$.
\end{conjecture}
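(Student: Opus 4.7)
The plan is to rewrite the claim in terms of the squared moduli, which are real cosine polynomials:
\[
F_k(x) := |P_k(x)|^2 = 3 + 2\cos 2\pi x + 2\cos 2\pi(k+1)x + 2\cos 2\pi(k+2)x,
\]
\[
G_k(x) := |Q_k(x)|^2 = 3 + 2\cos 2\pi x - 2\cos 2\pi(k+1)x - 2\cos 2\pi(k+2)x.
\]
The conjecture is equivalent to
\[
\Delta_k(p) := \int_0^{1/2}\bigl(G_k(x)^{p/2} - F_k(x)^{p/2}\bigr)\,dx > 0, \qquad p \in (2k, 2k+2).
\]
A direct inspection of the monomial supports of $P_k^m$ and $Q_k^m$ (the indices $b + (k+2)c$ with $a+b+c=m$ admit no collision when $m \leq k+1$) shows that $P_k^m$ and $Q_k^m$ have coefficients that agree up to sign, whence Parseval gives $\Delta_k(2k)=\Delta_k(2k+2)=0$. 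So $\Delta_k$ must form a strictly positive bump on the open interval, and the challenge is to detect this bump rigorously.

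\textbf{Fixed-$p$ approximation.} Using the identity $b^{p/2} - a^{p/2} = \frac{p}{2}(b - a)\int_0^1 (a + s(b - a))^{p/2 - 1}\,ds$, factor out the $p$-independent oscillation
\[
G_k(x) - F_k(x) = -8\cos((2k+3)\pi x)\,\cos(\pi x),
\]
leaving a positive weight $\Phi_k(x, p) := \int_0^1 \bigl(F_k(x) + s(G_k(x)-F_k(x))\bigr)^{p/2 - 1} ds$. For a fixed $p$ I would partition $[0, 1/2]$ at the sign changes of the oscillatory kernel, approximate $\Phi_k(\cdot, p)$ on each subinterval by a low-order Taylor polynomial in $x$, and integrate that Taylor polynomial exactly against the kernel -- the arising moments being elementary cosine integrals. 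Remainders are controlled by uniform upper bounds on the higher $x$-derivatives of $\Phi_k$, and summation gives a rigorous lower bound for $\Delta_k(p)$. Subintervals close to common or actual zeros of $F_k$ and $G_k$ need separate treatment: there I would expand $F_k, G_k$ in Taylor series about the zero and compute the contribution in closed form, so that the possible singularity of $\Phi_k$ for $p<2$ is absorbed explicitly rather than estimated.

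\textbf{Passing to the whole open interval.} Write $\Delta_k(p) = (p - 2k)(2k + 2 - p)\, h_k(p)$ with $h_k$ continuous on $[2k, 2k+2]$; the task becomes $h_k > 0$ on this closed interval. Cover $[2k, 2k+2]$ by finitely many $p$-subintervals whose length is small enough that the Lipschitz variation of $h_k$ in $p$ (controlled by integrals of $|\log F_k|\,F_k^{p/2}$ and its $G_k$ analogue) is dominated by the fixed-$p$ lower bound obtained at the centre of each subinterval. The endpoint values $h_k(2k) = \tfrac12 \partial_p \Delta_k(2k)$ and $h_k(2k+2) = -\tfrac12 \partial_p \Delta_k(2k+2)$ are themselves integrals of elementary functions involving $\log F_k, \log G_k$, amenable to the same Taylor/moment strategy. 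This reduces the continuous claim in $p$ to a finite certification.

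\textbf{Main obstacle.} The hardest point is retaining enough precision near the two endpoints, where $\Delta_k$ vanishes to first order and any crude quadrature error will swamp the signal. The remedy combines three ingredients: exploit the \emph{concavity} of $t \mapsto t^{p/2}$ for $p<2$ (and convexity for $p>2$) to obtain one-sided Riemann-sum estimates with a favourable sign of the truncation error; handle the vicinity of zeros of $F_k$ or $G_k$, where $\Phi_k$ becomes singular for $p<2$, by local Taylor expansion of $F_k, G_k$ at the zero and explicit closed-form integration; and restrict to $k \in \{0,1,2\}$, where the degrees of the trigonometric polynomials remain small enough to keep all moment and error bookkeeping manageable. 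For larger $k$ the same scheme should apply in principle, but the combinatorial load of the moment integrals and of the $p$-Lipschitz propagation grows, so the full conjecture lies beyond the reach of this elementary approach.
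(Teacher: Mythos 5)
Your reduction to $\Delta_k(p)=\int_0^{1/2}\bigl(G_k^{p/2}-F_k^{p/2}\bigr)\,dx>0$, the Parseval observation $\Delta_k(2k)=\Delta_k(2k+2)=0$ via non-colliding monomial supports, and the factorisation $G_k-F_k=-8\cos((2k+3)\pi x)\cos(\pi x)$ are all correct, but from there you take a genuinely different route from the paper. You propose to work in the $x$-variable: Taylor-expand the positive weight $\Phi_k(x,p)$ between sign changes of the oscillatory kernel, integrate against explicit cosine moments, then divide out $(p-2k)(2k+2-p)$ and cover the exponent interval by a finite Lipschitz-in-$p$ argument. The paper instead works entirely in the exponent variable $t=p/2$: it sets $d(t)=\int_0^{1/2}\bigl(G^t-F^t\bigr)\,dx$, observes that $d$ is entire with $d(k)=d(k+1)=0$, and derives positivity on $(k,k+1)$ from a handful of scalar facts --- $d'(k)>0$ (for $k=2$ also $d''(2)>0$) together with one \emph{global} sign condition $d^{(k+2)}<0$ on $[k,k+1]$, which is certified by a single Taylor polynomial of $d^{(k+2)}$ in $t$ around the midpoint with a rigorous remainder; the $t$-derivatives of $d$ are integrals of $G^t\log^jG$, so all the quadrature is one-dimensional. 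That convexity-of-a-high-derivative trick is what makes the problem tractable: it replaces your two-parameter grid of quadratures (in $x$ and in $p$) by one Taylor polynomial, and it cleanly resolves the endpoint degeneracy that your $h_k=\Delta_k/\bigl((p-2k)(2k+2-p)\bigr)$ device must fight --- bounding the Lipschitz constant of $h_k$ near $p=2k,\,2k+2$ ultimately requires the very same higher $p$-derivatives of $\Delta_k$ that the paper estimates anyway. For $k=0$ the paper avoids numerics completely with a short monotonicity argument for the bivariate marginal $\int_0^1|e(4y)+e(x+2y)+e(2x+y)|^p\,dx$, while your scheme has to handle the zero of $F_0$ at $x=1/3$ and the resulting singular weight for $p<2$ by an ad hoc local expansion.

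Two concrete concerns about your sketch. First, the ``one-sided Riemann-sum estimates with a favourable sign'' do not follow merely from convexity of $t\mapsto t^{p/2}$; what you would need is convexity of $x\mapsto F_k(x)^{p/2}$ on each subinterval, and the composition with the non-monotone trigonometric polynomial $F_k$ destroys that. Second, the ``finite certification'' over $p$ is asserted rather than shown to close: near the endpoints, the lower bound you obtain at the centre of each $p$-subinterval shrinks at the same rate as the guaranteed Lipschitz variation, so without an a priori structural fact (such as the paper's $d^{(k+2)}<0$) you have not ruled out that the cover must be refined indefinitely. Your direction is plausible and in principle sound, but it is substantially heavier than the paper's, whose key lemma --- concavity of $d^{(k)}$, equivalently $d^{(k+2)}<0$ on the whole interval --- is the ingredient your proposal is missing.
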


\section{Case $k=0$ of Conjecture \ref{conj:con3}}\label{sec:K0}

\begin{proposition}\label{prop:peakpol2} Let $F(x,y):=e(4y)+e(x+2y)+e(2x+y)$. Then,
for $p>2$, taking the marginal integral function
$f(y):=f_p(y):=\int_0^1 |F(x,y)|^pdx$, we have that (mod 1) $f$
has a unique, strict maximum at $0$. Conversely, for $0<p<2$ it
has strict global maximum at $\frac 12$.
\end{proposition}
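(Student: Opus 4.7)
My approach is to first simplify $f$ by the change of variable $u = x - 2y$ (legal since $x$ runs over a full period), which gives
\[
f(y) = \int_0^1 H(u,y)^{p/2}\,du, \qquad H(u,y) := 3 + 2\cos 2\pi u + 2\cos 2\pi(u+y) + 2\cos 2\pi(2u+y).
\]
Two applications of the sum-to-product formula together with the double-angle identity yield the compact factored form
\[
H(u,y) = 1 + 8\cos(\pi u)\cos(\pi(u+y))\cos(\pi(2u+y)).
\]
At the two endpoints of interest one reads off $H(u,0) = (1+2\cos 2\pi u)^2 = |P_0(u)|^2$ and $H(u,\tfrac12) = 1+4\sin^2 2\pi u = |Q_0(u)|^2$, so $f(0)=\|P_0\|_p^p$ and $f(\tfrac12)=\|Q_0\|_p^p$; thus the $0<p<2$ case of the proposition contains the $k=0$ instance of Conjecture~\ref{conj:con3}, while the $p>2$ case is the analogous inequality in the opposite direction.

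Next I would exploit symmetries. The substitution $u\mapsto -u$ gives $f(-y)=f(y)$, and combined with the $1$-periodicity this reduces the claim to the interval $y\in[0,\tfrac12]$. Differentiating under the integral and applying sum-to-product once more yields
\[
f'(y) = -4\pi p \int_0^1 H(u,y)^{p/2-1}\,\cos(\pi u)\,\sin(\pi(3u+2y))\,du.
\]
At both $y=0$ and $y=\tfrac12$ the weight $H(\cdot,y)$ is even about $u=\tfrac12$ while $\cos(\pi u)\sin(\pi(3u+2y))$ is odd about $u=\tfrac12$, hence $f'(0)=f'(\tfrac12)=0$. The proposition therefore reduces to the statement that $f$ has no further critical point in the open interval $(0,\tfrac12)$ and that the monotonicity between the two endpoints points in the correct direction for each range of $p$ (a Taylor expansion of $|F|^p$ in $y$ recovers $f''(0)$ explicitly in terms of $|P_0|^{p-4}$, $\sin 2\pi u + \sin 4\pi u$ and $\cos 2\pi u + \cos 4\pi u$, whose sign for $p$ close to $2$ is determined by $p-2$).

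The hard part is precisely this strict monotonicity on $(0,\tfrac12)$. My plan is to use the factored form of $H$ to split the $u$-integral defining $f'(y)$ at the zeros of the oscillatory factor $\cos(\pi u)\sin(\pi(3u+2y))$, so that $f'(y)$ becomes a finite signed sum of integrals over subintervals on which that factor has constant sign. On each such piece the weight $H^{p/2-1}$ admits good Taylor-polynomial approximations, and combining these with numerical integration and explicit error estimates---the very numerical-analysis machinery developed throughout the paper---should pin down the sign of $f'$ on $(0,\tfrac12)$. The principal technical obstacle is the behaviour of $H^{p/2-1}$ near $u=\tfrac13,\tfrac23$ as $y\to 0$: these are the true zeros of $|F(x,0)|$ (and $H(u,0)$ vanishes there to order two), so for $p<2$ the weight develops arbitrarily large spikes in that limit, and the quadrature error has to be controlled through a careful local analysis of $H$ near its (near-)double zeros.
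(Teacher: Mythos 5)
Your preliminary reductions are all correct (the change of variable, the factorization $H = 1 + 8\cos(\pi u)\cos(\pi(u+y))\cos(\pi(2u+y))$, the formula for $f'$, the endpoint identifications, and the vanishing of $f'$ at $0$ and $\tfrac12$), but the proof has a genuine gap at exactly the point where you say ``the hard part is precisely this strict monotonicity.'' What you offer there is a \emph{plan} to split the $u$-integral at the zeros of $\cos(\pi u)\sin(\pi(3u+2y))$ and determine the sign of $f'$ by numerical integration with error bounds; that is not yet a proof, and you yourself flag a serious obstruction: for $p<2$ the weight $H^{p/2-1}$ blows up near the double zeros $u=\tfrac13,\tfrac23$ of $H(\cdot,0)$, so the quadrature errors are not obviously controllable uniformly in $y$ near $0$. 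As written, the proposal reduces the proposition to an unresolved numerical claim.

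The paper avoids all of this with a different change of variable that makes the monotonicity transparent and entirely analytic. Substituting $x\mapsto x+\tfrac{3y}{2}$ gives
\[
\Bigl|F\bigl(x+\tfrac{3y}{2},y\bigr)\Bigr| \;=\; \bigl|\,1+2e(\tfrac y2)\cos(2\pi x)\,\bigr|,
\]
and pairing $x$ with $x+\tfrac12$ (which flips the sign of $\cos 2\pi x$) yields
\[
f_p(y)=\int_{-1/4}^{1/4}\Bigl(\bigl|1+2e(\tfrac y2)\cos 2\pi x\bigr|^p+\bigl|1-2e(\tfrac y2)\cos 2\pi x\bigr|^p\Bigr)\,dx.
\]
Differentiating in $y$ then gives, for each fixed $x\in(-\tfrac14,\tfrac14)$, an integrand whose sign is governed solely by the comparison $|1+2e(\tfrac y2)\cos 2\pi x|\gtrless|1-2e(\tfrac y2)\cos 2\pi x|$, and this comparison is decided immediately because $2e(\tfrac y2)\cos 2\pi x$ lies in the open first quadrant for $y\in(0,\tfrac12)$ and $x\in(-\tfrac14,\tfrac14)$. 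So $\Phi(x,y)$ is \emph{pointwise} monotone in $y$, with direction determined by the sign of $p-2$, and $f_p$ is strictly monotone on $[0,\tfrac12]$ — no numerics, no error estimates, no need to worry about the singular weight. I would recommend you look for this kind of ``diagonal'' change of variables whenever a bivariate exponential polynomial in $(x,y)$ is involved: the decisive step here is not to integrate out $x$ and study $f(y)$ as an abstract function, but to normalize the phases so that $y$ enters only through a single rotating factor $e(\tfrac y2)$.
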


\begin{remark} Note that $f_p(0)<f_p(1/2)$ for $0<p<2$ is exactly Conjecture \ref{conj:con3} for $k=0$.
\end{remark}
\begin{proof} (Based on the work \cite{AJM} of A. Bonami and Sz. Gy. R\'ev\'esz.) It is easy to see that $f$ is even: this comes from the identity $|F(-x,-y)|=|F(x,y)|$. Let us prove that it is monotonous on $[0,
\frac 12]$. Observe that
$$
|F(x+\frac{3y}{2},y)|=|e(4y)+e(x+\frac{7y}{2})+e(2x+4y)|=|2e(\frac y2)\cos (2\pi x) +1|.
$$
Now a translation of $x$ by 1/2 leads to a sign change of $\cos (2\pi x)$, therefore it
suffices to integrate $|2e(\frac y2)\cos (2\pi x) +1|^p$ on an interval of length 1/2, and
to add on the very same interval the integral of  $|-2e(\frac y2)\cos (2\pi x) +1|^p$. Thus
$$
f_p(y)=\int_{-\frac14}^{\frac14}\left(|2e(\frac y2)\cos (2\pi x)
+1|^p +|2e(\frac y2)\cos (2\pi x) -1|^p\right)dx.
$$
Any interval of length 1/2 would suffice, but we prefer to keep $\cos(2\pi x)$ positive,
otherwise there is a series of sign considerations which would make everything
overcomplicated: that suggests to choose $(-1/4,1/4)$.
So it is sufficient to show that the quantity
$$
\Phi(x,y):=|2e(\frac y2)\cos(2\pi x) +1|^p +|2e(\frac y2)\cos (2\pi
x) -1|^p
$$
is monotonous for $0<y<\frac 12$ and for fixed $x\in (-\frac 14,
\frac 14)$.

We take the derivative
\begin{align*}
\frac{\partial \Phi}{\partial y}(x,y)=&-2p\pi \sin(\pi y)\cos(2\pi x) \\ & \cdot \left\{
|2e(\frac y2)\cos(2\pi x) +1|^{p-2} - |2e(\frac y2)\cos (2\pi x)
-1|^{p-2} \right\}
\end{align*}

$2e(\frac y2)\cos(2\pi x)$ lies in the first quadrant, since
$e(\frac y2)=e^{\pi iy}$ lies there when $y\in (0, 1/2)$, and
$x\in (-1/4,1/4)$, so $\cos(2\pi x) >0$. Hence $|2e(\frac
y2)\cos(2\pi x) +1| > |2e(\frac y2)\cos(2\pi x) -1|$. We find that
the derivative's sign is the opposite of the sign of the
difference in the second line. It follows that $f_p$ has its
maximum at zero when $p>2$ and at 1/2 when $p<2$.
\end{proof}

\section{The $k=1$ case of Conjecture \ref{conj:con3}}\label{sec:k1}

\comment{\begin{proposition}\label{prop:k1xy} Let
$F(x,y):=1+e(x+y)+e(3x+2y)$ and consider the $p^{\rm th}$ marginal
integrals $f_p(y):=f(y):=\int_0^1 |F(x,y)|^p dx$. Then
for any $2<p<4$ 
we have $f(1/2)>f(0)$.
\end{proposition}

\begin{remark} Monotonicity of values of $f(y)$ is now not claimed!
\end{remark}
We are to compare
$$
f_p(0)=\int_{-1/2}^{1/2} |1+ e(x)+e(3x)|^p dx \quad \textrm{and} \quad f_p(1/2)=\int_{-1/2}^{1/2} |1- e(x)+e(3x)|^p dx.
$$
\bigskip

Observe that
$$
\int_{-1/2}^{1/2} |1 - e(x)+e(3x)|^p dx = \int_0^1 |1 - e(x-1/2)+e(3(x-1/2))|^p dx =
\int_0^1 |1 + e(x) - e(3x)|^p dx.
$$
}
To show the $k=1$ case of Conjecture \ref{conj:con3} it suffices to prove Proposition \ref{prop:k1x} below.

\begin{proposition}\label{prop:k1x} Let
$F_{\pm}(x):=1+ e(x)\pm e(3x)$ and consider the $p^{\rm th}$ marginal
integrals $f_{\pm}(p):=\int_0^1 |F_{\pm}(x)|^p dx$ as well as their difference $\Delta(p):=f_{-}(p)-f_{+}(p)=\int_0^1 |F_{-}(x)|^p-|F_{+}(x)|^p dx$. Then for all $p\in(2,4)$, $\Delta(p)>0$.
\end{proposition}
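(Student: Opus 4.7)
The plan is to reduce the proposition to a finite numerical verification together with explicit uniform control of $\Delta$ in $p$. First I would expand the moduli squared,
\begin{align*}
|F_{\pm}(x)|^{2} \;=\; 3 + 2\cos(2\pi x) \pm 2\cos(4\pi x) \pm 2\cos(6\pi x),
\end{align*}
and note that Parseval gives $\int_0^1 |F_\pm|^2\,dx = 3$ and, since the Fourier coefficients of $|F_+|^2$ and $|F_-|^2$ agree in absolute value, Parseval applied once more to the trigonometric polynomial $|F|^2$ yields $\int_0^1 |F_\pm|^4\,dx = 15$. Hence $\Delta(2)=\Delta(4)=0$, so the sign of $\Delta$ on $(2,4)$ cannot be deduced from boundary values alone. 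Evenness $|F_{\pm}(-x)|=|F_{\pm}(x)|$ lets me restrict the integral to $x\in[0,1/2]$.

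For the main interior range, say $p\in[2+\eta,4-\eta]$ with a suitably small $\eta>0$, I would fix a finite grid $p_1<\cdots<p_N$, evaluate $\Delta(p_j)$ by a midpoint Riemann sum on $[0,1/2]$ with $M$ subdivisions, and bound the quadrature error by $(24M^2)^{-1}\Linf{\partial_x^2(|F_-|^{p_j}-|F_+|^{p_j})}$. Since $F_\pm$ and their first two derivatives admit explicit bounds on $[0,1]$, and since $|F_\pm|\ge c_0>0$ (both $F_+$ and $F_-$ are non--vanishing, as can be checked by separating real and imaginary parts at their common zeros), the error bound is effective for all $p\in[2,4]$. Once $\Delta(p_j)>\varepsilon_j$ is verified at each grid point with a comfortable margin, positivity spreads to a neighbourhood via the uniform estimate
\begin{align*}
|\Delta'(p)| \;\le\; \int_0^1 \bigl||F_-|^{p}\log|F_-| - |F_+|^{p}\log|F_+|\bigr|\,dx \;\le\; C(\eta), \qquad p\in[2+\eta,4-\eta],
\end{align*}
so a grid of mesh below $\min_j \varepsilon_j/C(\eta)$ yields $\Delta(p)>0$ throughout the compact middle range.

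The delicate remainder is the two narrow end zones $(2,2+\eta)$ and $(4-\eta,4)$, where $\Delta$ itself is small. There I would use the first--order Taylor expansion
\begin{align*}
\Delta(p) \;=\; \Delta'(2)(p-2)+\tfrac{1}{2}\Delta''(\xi)(p-2)^{2}, \qquad \xi\in(2,p),
\end{align*}
and its analogue at $p=4$. The linear coefficients $\Delta'(2)>0$ and $\Delta'(4)<0$ are definite integrals (the former reducing, up to a factor of $\tfrac12$, to $\int|F_-|^2\log|F_-|^2 - \int|F_+|^2\log|F_+|^2$) that can be computed numerically once and for all with rigorous error control, while $\Delta''(\xi)$ is uniformly bounded by the same type of estimate as above. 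Choosing $\eta$ small enough that the quadratic remainder is dominated by the linear term closes the argument. The hardest technical point is producing sharp explicit uniform--in--$p$ bounds on $\partial_x^2|F_\pm|^p$ and $\partial_p|F_\pm|^p$ that keep the numerical error well below $\Delta(p)$: because $\Delta(p)\to 0$ at both ends, the usable error budget shrinks there, and the balancing of $M$, grid density, and $\eta$ is precisely where the ``delicate calculus'' announced in the abstract must enter.
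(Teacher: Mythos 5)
Your plan is correct in outline and would, in principle, yield a proof, but it is a genuinely different route from the one in the paper, and the difference matters precisely at the point you yourself flag as the hardest.

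You propose a grid in $p$ on a compact middle interval, spread positivity via a Lipschitz bound on $\Delta'$, and handle the shrinking end zones by a first-order Taylor expansion at $p=2$ and $p=4$. The paper instead substitutes $t=p/2$, writes $d(t)=\int_0^{1/2}\bigl(G_-^t-G_+^t\bigr)$ with $G_\pm=|F_\pm|^2$, uses $d(1)=d(2)=0$ (Parseval), and then proves just two facts: $d'(1)>0$ and $d'''<0$ on all of $[1,2]$. Concavity of $d'$ together with $d(1)=d(2)=0$ and $d'(1)>0$ forces $d'$ to have a single sign change, so $d$ increases then decreases and is therefore positive in the interior. This structural step replaces your entire grid search, and it also replaces your Taylor expansion near \emph{both} endpoints by a single endpoint computation. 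More importantly, it converts the quantity being numerically certified from $\Delta(p)$ (which vanishes at the endpoints, so the usable error budget shrinks to zero — exactly your acknowledged difficulty) to $d'''(t)$, which the paper's numerics show is bounded away from zero on the whole interval; the error budget is then uniform. The paper certifies $d'''<0$ not by a grid either, but by computing an approximate degree-7 Taylor polynomial of $d'''$ around $t=3/2$ with rigorous coefficient and remainder bounds, and then showing that polynomial (shifted up by the total error $\delta$) is negative on $[1,2]$ by checking the signs of its derivatives at $t=1$ and that its fifth derivative, a downward-opening quadratic with negative discriminant, is everywhere negative. Your approach is workable but more computationally burdensome and requires a more delicate balancing of $\eta$, the grid mesh, and the quadrature step near the endpoints; the paper's concavity trick is exactly what removes that balancing act.
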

\begin{proof}
Let us introduce a few further notations. We will write $t:=p/2\in[1,2]$ and put
\begin{align}\label{eq:Gpmdef}
G_{\pm}(x)&:=|F_{\pm}(x)|^2,\qquad
g_{\pm}(t):=\frac12 f_{\pm}(2t)= \int_0^{1/2} G_{\pm}^t(x) dx,\qquad \\ \label{eq:ddef} d(t)&:=\frac 12 \Delta(2t)=g_{-}(t)-g_+(t)=\int_0^{1/2} \left[ G_{-}^t(x)-G_{+}^t(x)\right] dx.
\end{align}
Observe that $G$ being a nonnegative trigonometric polynomial, $d$ is an entire function of exponential type. So we are to prove that $d(t)>0$ for $1<t<2$. Note that by Parseval's formula $d(1)=d(2)=0$, since $2 \int_0^{1/2} G_{\pm}^1(x) dx=\int_0^1 F_{\pm}^2(x) dx=1+1+1$, and $2 \int_0^{1/2} G_{\pm}^2(x) dx=\int_0^1 F_{\pm}^4(x) dx=1+4+1+4+4+1=15$.

Our strategy in proving $d(t)>0$ will consist of two steps: first we prove that $d'(1)>0$, and then that $d'$ is concave in $[1,2]$, i.e. that $d^{'''}<0$. Since $d(1)=d(2)=0$, in view of Rolle's theorem $d'$ takes 0: but it can not have two different roots, as then by concavity at the endpoints of the interval $[1,2]$ it would have to assume negative values (while we will have $d'(1)>0$). Thus we find that $d'$ changes from positive to negative values at a unique zero point, say $\tau\in (1,2)$. It follows that $d$ increases between 1 and $\tau$ and decreases in $[\tau,2]$: so $\min_{[1,2]} d =\min \{ d(1), d(2)\}=0$, and $d$ is positive on $(1, 2)$.

\begin{lemma}\label{l:diffder1benpoz} We have $d'(1)>0$.
\end{lemma}
\begin{remark} By numerical calculation, $d'(1)\approx 0.0948...$, but we don't need the precise value. The only thing we need is that it is not too small, so allowing a feasible error bound for the approximate calculations, after deduction of a worst case error estimate the rest will still remain positive. Of course, to make our life as easy as possible, we set the error bound for the total error just below the already calculated numerical value. Therefore, preliminary numerical calculation of the value of $d'(1)$ only guides us in setting the parameters of the numerical proof, which in turn will prove positivity, but not the value of $d'(1)$.
\end{remark}

\begin{proof} We will give a detailed calculation, for it will serve as a model for the later, more general calculation with higher derivatives of $d$.

First of all observe that we have to consider the difference of two integrals, one for $G=G_{+}$ and another one for $G_{-}$, so writing
$$g^{'}(1):=g^{'}_{\pm}(1):=g^{(1)}_{\pm}(1):=\int_0^{1/2} G(x)\log G(x) dx,$$
we are to compute $d'(1)=g^{'}_-(1)-g^{'}_+(1)$.

Preliminary numerical calculation shows that finally we should find $d'(1)>0.09$, so for the two occurring numerical integration we may allow total errors up to 0.045, say.

We wish to use the standard approximation formulae\footnote{We
essentially could have $\|\Phi''\|_{1}$ etc. here.}
\begin{equation}\label{eq:Riemann}
\left| \int_0^{1/2} \Phi(\alpha) d\alpha - \frac{1}{2N}\sum_{n=1}^{N}
\Phi\left(\frac{n-1/2}{2N}\right)\right| \leq \min\left(
\frac{\|\Phi''\|_{\infty}}{192N^2},~\frac{\|\Phi'\|_{\infty}}{16N}\right),
\end{equation}
when numerically integrating $\Phi:=H:=G\log G$ along the $x$ values. As a first step, we compute the $x$-derivatives of $G(x)$ as
\begin{align}\label{eq:GpmdefFirst}
G_{\pm}(x)&=3+2\{\cos(2\pi x)\pm \cos(4\pi x)\pm \cos(6\pi x)\}\\
G_{\pm}^{'}(x)&=-4\pi \sin(2\pi x)\mp 8 \pi \sin(4\pi x)\mp 12 \pi \sin(6\pi x))\\
G_{\pm}^{''}(x)&=-8\pi ^2 \cos(2\pi x)\mp 32 \pi ^2 \cos(4\pi x)\mp 72 \pi ^2 \cos(6\pi x)).
\end{align}
Also we find
\begin{equation}\label{eq:Gpmnorm}
\|G_{\pm}\|_\infty \leq 9, \qquad \| G_{\pm}^{'}\|_\infty \leq 24 \pi, \qquad \|G_{\pm}^{''}\|_\infty \leq 112 \pi^2 .
\end{equation}

We also compare $G'$ and $\sqrt{G}=|F|$, more precisely $G'^2$ and $G$. (Note that
$G'= 2 |F| \cdot |F|'= 2 \sqrt{G}\cdot \left(\sqrt{G}\right)'$.) To this end we write $u=\cos v$ with $v=2\pi x$ and calculate
\begin{align*}
G_{\pm}(x)&=3+2\cos v \pm 2 \cos 2v \pm 2 \cos 3v = 3+2\cos v \pm 2 (2 \cos^2 v  -1 + 4 \cos^3 v -3 \cos v) \\ &= 3 + 2 u \pm 2(4 u^3 + 2 u^2 -3 u -1)=\begin{cases} 8u^3+4u^2-4u+1 & (G=G_+)\\ -8u^3-4u^2+8u+5 & (G=G_-)\end{cases}.
\end{align*}
Using these polynomial expressions in the range $|u|=|\cos(2\pi x)|\leq 1$, numerical calculation immediately gives
\begin{equation}\label{eq:Gpmminimum}
\min_{\TT} G_{+} \approx 0.3691... >1/e \qquad {\textrm{and}}\qquad \min_{\TT} G_{-} \approx 0.1249... > 1/9~\,.
\end{equation}
On the other hand
\begin{align*}
G'^2_{\pm}(x)&=(4\pi)^2 (\sin v\pm 2 \sin 2v \pm 3 \sin 3v)^2=(4\pi \sin v)^2 \left[1\pm  4 \cos v \pm 3( 4\cos^2v-1) \right]^2 \\ &= 16\pi^2(1-u^2)\left[(1\mp 3 \pm 4u \pm 12 u^2)\right]^2=\begin{cases} 64\pi^2 (1-u^2)(6 u^2 +2 u -1)^2 & (G=G_+)\\ 256\pi^2 (1-u^2)(3u^2+u-1)^2 & (G=G_-)\end{cases}.
\end{align*}
Therefore,
\begin{equation}\label{eq:GprimesquareperG}
\frac{G'^2_{\pm}}{G_{\pm}}(x)=\begin{cases} 64\pi^2 \frac{(1-u^2)(6 u^2 +2 u -1)^2}{8u^3+4u^2-4u+1} & (G=G_+)\\ 256\pi^2 \frac{(1-u^2)(3u^2+u-1)^2}{-8u^3-4u^2+8u+5} & (G=G_-)\end{cases}.
\end{equation}

These rational functions can be maximized numerically on the range $u\in[-1,1]$ of $u=\cos(2\pi x)$. We thus obtain
\begin{equation}\label{eq:GprimecompareG}
G'^2_{+} (x) < 1300 G_{+}(x) \qquad {\textrm{and}}\qquad G'^2_{-} (x) < 1100 G_{-}(x) .
\end{equation}
Similarly, we compare $G''$ and $G$, too. First, similarly as before
\begin{align*}
G^{''}_{\pm}(x)&=-8\pi^2 (\cos v\pm 4 \cos 2v \pm 9 \cos 3v)=-8 \pi^2 \left[\cos v \pm  4 (\cos^2 v -1) \pm 9 ( 4\cos^3 v-3 \cos v) \right] \\ &= -8 \pi^2\left[(u \pm 36 u^3 \pm 8 u^2 \mp 27 u \mp 4)\right]=\begin{cases} -8\pi^2 (36 u^3+8u^2-26 u -4) & (G=G_+)\\ 8 \pi^2 (36u^3 + 8u^2-28u-4) & (G=G_-)\end{cases}.
\end{align*}

Second, for the quotient we thus obtain
\begin{equation}\label{eq:GdoubleprimesquareperG}
\frac{G^{''}_{\pm}}{G_{\pm}}(x)=\begin{cases} -8 \pi^2 \frac{36 u^3+8u^2-26 u -4}{8u^3+4u^2-4u+1} & (G=G_+)\\ 8 \pi^2 \frac{36u^3 + 8u^2-28u-4}{-8u^3-4u^2+8u+5} & (G=G_-)\end{cases}.
\end{equation}
So finally numerical computation yields
\begin{equation}\label{eq:GdoubleprimecompareG}
|G^{''}_{+} (x)| < 2200 G_{+}(x) \qquad {\textrm{and}}\qquad |G^{''}_{-} (x)| < 4000 G_{-}(x) .
\end{equation}
Let us consider now the computation of $\Phi''(x)=H''(x)$, where $H:=G\log G$ with $G=G_{\pm}$. More generally, we can differentiate with respect to $x\in [0,1/2]$ the function $H(x):=H_{t,j,\pm}(x):=G^t(x)\log^j G(x)$, which we will need later. We get
\begin{align}\label{eq:Hprimeingeneral}
H'(x)=H'_{t,j,\pm}(x)&=\left\{ t G^{t-1}(x) \log^j G(x) + G^t(x) j \log^{j-1} (G(x)) \frac{1}{G(x)}\right\} G'(x)\notag \\& = G^{t-1}(x) G'(x) \log^{j-1} G(x) \left\{t\log G(x) + j \right\},
\end{align}
so in particular for $t=1$ and $k=1$ we conclude $H'(x)=H'_{1,1,\pm}(x)= G'(x) \left\{\log G(x) +1 \right\}$ and thus also
\begin{equation}\label{eq:Hdoubleprimespecial}
H''(x):=H^{''}_{1,1,\pm}(x)= G''(x) \left\{\log G(x) +1 \right\} +
\frac{G'^2(x)}{G(x)}.
\end{equation}
Therefore, we obtain from \eqref{eq:Gpmnorm}, \eqref{eq:Gpmminimum} and \eqref{eq:GprimecompareG} 
\begin{equation}\label{eq:Hdoubleprimespecial1}
\|H''\|_\infty:=\|H^{''}_{1,1,\pm}\|_\infty < 112\pi^2 \log(9e)+ 1300 < 4900.
\end{equation}
It follows that in the numerical integration formula \eqref{eq:Riemann} the step number could be chosen to satisfy $4900/(192N^2)<0.045$, that is $N>\sqrt{4900/8.64}\approx 23.81\dots$ i.e. $N\geq 24$.

We thus see that the Riemann sums of the form \eqref{eq:Riemann}
with $N\geq 24$ nodes will provide errors less than 0.045 in each
of the two integrals $g'_{\pm}(1)$, whence the total error in the
Riemann sum approximation of $d'(1)=g_-(1)-g_+(1)$ must lie below
0.09. On the other hand a standard numerical calculation of the
Riemann sums $g'_{\pm}(1)$ yields the approximate value $\approx
0.0948...$, which is well over $0.09$, hence the lemma is proved. (As for negligibility of the computational error occurring in the computer calculation of function values, see the more detailed analysis around formula \eqref{eq:Comperror}.)
\end{proof}


Note that from \eqref{eq:Hprimeingeneral} we can as well calculate the formula for $H''$ in the general case as
\begin{align}\label{eq:Hdoubleprimegeneral}
H''(x)&:=H^{''}_{t,j,\pm}(x) = G''(x) G^{t-1}(x) \log^{j-1} G(x) \left\{t \log G(x)+j \right\} \notag \\ & ~~+ G'^2(x) G^{t-2}(x) \log^{j-2} G(x) \left\{ t(t-1) \log^2 G(x)  + j(2t-1) \log G(x) + j(j-1) \right\}.
\end{align}

Our approach will be a computation of some approximating polynomial, which is, apart from a possible slight and well controlled error, a Taylor polynomial of $d'''$.

Numerical tabulation of values give that $d'''$ is decreasing from $d'''(1)\approx -0.2327...$ to even more negative values as $t$ increases from 1 to 2. Thus our goal is to set $n\in \NN$ and $\delta_j>0$, ($j=0,\dots,n+1)$ suitably so that in the Taylor expansion
\begin{equation}\label{eq:d3Taylor}
d^{'''}(t)=\sum_{j=0}^n \frac{d^{(j+3)}(\frac32)}{j!}\left(t-\frac32\right)^j +R_{n}(d^{'''},t),\qquad
R_{n}(d^{'''},t):=\frac{d^{(n+4)}(\xi)}{(n+1)!}\left(\xi-\frac32\right)^{n+1}
\end{equation}
the standard error estimate
\begin{align}\label{eq:Rd3t}
|R_n(d^{'''},t)|& \leq \frac{\|H_{\xi,n+4,+}\|_{L^1[0,1/2]} + \|H_{\xi,n+4,-}\|_{L^1[0,1/2]}}{(n+1)!} \cdot 2^{-(n+1)} \notag \\ &\leq  \frac{\frac12\|H_{\xi,n+4,+}\|_\infty + \frac12\|H_{\xi,n+4,-}\|_\infty }{(n+1)! 2^{n+1}}\\ & \leq \frac{\max_{1\leq \xi\leq 2} \|H_{\xi,n+4,+}\|_\infty + \max_{1\leq \xi\leq 2} \|H_{\xi,n+4,-}\|_\infty}{(n+1)! 2^{n+2}}.\notag
\end{align}
provides the appropriately small error $\|R_n(d^{'''},\cdot)\|_\infty <\delta_{n+1}$. Furthermore we want to compute appropriate approximation $\overline{d}_j$ of $d^{j+3}(3/2)$, such that
\begin{equation}\label{eq:djoverbarcriteria}
\left\|\frac{d^{(j+3)}(\frac32)-\overline{d}_j}{j!}\left(t-\frac32\right)^j\right\|_\infty =\frac{\left|d^{(j+3)}(\frac32)-\overline{d}_j\right|}{2^j j!}< \delta_j\qquad (j=0,1,\dots,n).
\end{equation}
Naturally, we wish to choose $n$ and the partial errors $\delta_j$ such that $\sum_{j=0}^{n+1}\de_j = \de:=0.231$, say, so that $d'''(t)< P_n(t) +\de$ with
\begin{equation}\label{eq:Pndef}
P_n(t):=\sum_{j=0}^n \frac{\overline{d}_j}{j!}\left(t-\frac32\right)^j.
\end{equation}
Here the approximate values $\dbj$ will be obtained by numerical integration, i.e. Riemann sums to approximate the integrals defining $d^{(j+3)}(3/2)$. Recall that
\begin{align}\label{eq:djintegral}
d^{(j)}(t)&=\frac{d^j}{d t^j} \left( \int_0^{1/2} \left[G_{-}^t(x) - G_{+}^t(x)\right]dx \right)\notag \\&= \int_0^{1/2} \left[\log^{j} G_{-}(x) \cdot G_{-}^t(x) \right]dx -\int_0^{1/2} \left[\log^{j} G_{+}(x) \cdot G_{+}^t(x)\right]dx=:g_{-}^{(j)}(t)-g_{+}^{(j)}(t).
\end{align}

To be precise, we apply the first error formula of
\eqref{eq:Riemann} with $N_j\in\NN$ steps, where $N_j$ are set in
function of a prescribed error of approximation $\eta_j$, which in
turn will be set in function of the choice of $\de_j$.

In fact, there is another source of error, that of the
computational error of the actual computer calculation of the
involved function values, used in computing the Riemann sums (to
approximate the integrals $g_{\pm}^{(j)}(t)$ in the formula \eqref{eq:djintegral} for
$d^{(j)}$). Let us agree that it is more than satisfactory to ensure a relative error bound of $10^{-4}$ for the total computational error as compared to the respective theoretical errors.

Let's denote the calculated value of a function $f$ by$f^*$. Then have to estimate
\begin{equation}\label{eq:Comperror}
\Delta_c:=|(G^t\log^jG)^*-G^t\log^jG|=|((G^t)^*-G^t)(\log^jG)^*+G^t((\log^jG)^*-\log^jG)|.
\end{equation}
We estimate this in parts. For the actual computation we
applied the MS Excel program, which computes the mathematical
functions with 15 significant digits of
precision\footnote{According to the user's manual, the MS Office
Excel 2003 program, what we have used throughout, calculates the
function values of the occurring mathematical functions with 15
significant digits of precision, see e.g.
{\texttt{http://office.microsoft.com/en-us/excel-help/
change-formula-recalculation-iteration-or-precision-HP010054149.aspx}}}.
$G$ both here in \eqref{eq:GpmdefFirst} for $k=1$ and later in
\eqref{eq:GpmdefFirst2} for the case $k=2$ consists of a sum of
cosine functions with coefficients $\pm2 $, so altogether with
weights $\le 6$. As $|\cos x| \le 1$, the error bound becomes $6
\times 0.5 \times 10^{-15}$, that is $|G^*-G| \le 3 \times
10^{-15}$.

Considering the cases $k=1$ and $k=2$, the values of $G$ always
lie between $1/16$ and $9$ in view of \eqref{eq:Gpmnorm} and
\eqref{eq:Gpmminimum} for $k=1$ and \eqref{eq:Gpmnorm2} and
\eqref{eq:Gpmminimum2} for $k=2$, respectively. That means that
the first significant digit of $\log G$ is at most at the place of
$10^0$, and the calculation error of the logarithm of it lies
below $0.5\times 10^{-14}$. Thus we can estimate
$|\log(G^*)-\log(G)|\le |G^*-G|\cdot \|\log'(G)\|_\infty< 3
\cdot10^{-15} \cdot 16$, and $|(\log(G^*))^*-\log(G)|<4.8 \times
10^{-14}+0.5 \times 10^{-14} < 5.3 \times 10^{-14}$.

To estimate the error in computing $G^t$ we write $G^t=e^{t\cdot
\log G}$, hence $|(G^t)^*-G^t|\leq G^t \times \left| \exp \left( t
\left\{ (\log G^*)^* -\log G\right\}\right)-1 \right|$. The
elementary estimate $|e^u-1|< 1.8 |u|$ when $|u|<0.5$ yields
$\left| \exp \left( t \left\{ (\log G^*)^* -\log
G\right\}\right)-1 \right| \leq 1.8 \times \left|t \left\{ (\log
G^*)^* -\log G\right\}\right| <  1.8 \times t \times 5.3 \times
10^{-14} < 10^{-13} t$ for all the possibly occurring values of
$t$ between $1$ and $3$. (Note that $t \times 5.3 \times
10^{-14}<0.5$.) In all, $|(G^t)^*-G^t| <10^{-13}\times G^t$.

Since $|\log G|\leq \log 16 < 2.8$ we have both for $|(\log
G^*)^*|$ and $|\log G|$ the upper estimate of 3. So for any $j\in
\NN$ we can write $|((\log G^*)^*)^j-\log^j G|\le|(\log
G^*)^*-\log G| \cdot \sum_{\ell=0}^{j-1} \left|((\log
G^*)^*)^{j-\ell}\log^{\ell} G \right|\le 5.3 \cdot 10^{-14}\times j
\times 3^{j-1}$ and also $|\log^j G|\leq 3^j$, $|((\log
G^*)^*)^j|<3^j$.

Turning to the estimation of \eqref{eq:Comperror} we thus get
$$
\Delta_c \leq 10^{-13}\times G^t \cdot 3^j + G^t \cdot 2 \cdot
10^{-14}\times j \times 3^j =(2j+10)\times 10^{-14} 3^j G^t < 32
\times 10^{-14} 3^{6+j}
$$
using that $1\leq t\leq 3$, $G \leq 9$, $j\leq 11$.

Note that actually we need $\Delta_c$ to be negligible compared to
$\eta_j$ (in Tables \ref{t:Deltas} and \ref{t:Deltas_2}) or the prescribed error $\delta$ (in Lemmas \ref{l:diffder1benpoz}, \ref{l:diffder2benpoz} and \ref{l:diff2der2benpoz}), the prescribed approximation error of the Riemann sums approximation of the occurring $d^{(j)}(t)$. (Observe that due to the Riemann sums approximation of various order derivatives, there is a shift of indices between the $j$ in the order of differentiation and the $j$ occurring in the formula $\eta_j=2^j j! \delta_j /2$ preceding \eqref{eq:Njchoice}). To meet the set relative error bound of $10^{-4}$, we want $\Delta_c<10^{-4} \eta_j$ or $\Delta_c<10^{-4} \delta$, respectively.

When $j$ is large, more precisely when $7\leq j \leq 11$, we just make a rough estimate of $\Delta_c$ using $j\leq 11$ only. This leads to $\Delta_c< 5\times 10^{-5}$. As $j\geq 7$ occurs only in Tables \ref{t:Deltas} and \ref{t:Deltas_2}, and the minimal value of such $\eta_j$-s in the two tables is $1.2$, we obtain $\eta_j\times 4.2 \times 10^{-5}>\Delta_c$.

In the case $j<7$ the smallest value is 0.025. Now we use value $j=6$, so $\Delta_c<1.8 \times 10^{-7}$, and $\eta_j\times 0.72 \times 10^{-5}>\Delta_c$

For $j\leq 6$, our choice of $\eta_j$ has a minimum of 0.025, while in the endpoint approximation lemmas (i.e. Lemmas \ref{l:diffder1benpoz}, \ref{l:diffder2benpoz} and \ref{l:diff2der2benpoz}) the minimal occurring $\delta$ is 0.017. In all we may use $j\leq 6$ and want $\Delta_c< 10^{-4} \times 0.017 =1.7 \times 10^{-6}$. However, substituting $j=6$ into the above error estimate yields $\Delta_c< 1.8 \times 10^{-7}$, which is

In cases of mentioned lemmas the value of $j$ is $1$ or $2$, so we use the $j=2$ for estimating $\Delta_c$, we obtain: $\Delta_c< 2.1 \times 10^{-9}$. The minimum of $\delta$-s in the three lemma is $0.034/2=0.017$ in Lemma \ref{l:diffder2benpoz}. It follows: $\delta \times 2.6 \times 10^{-7}>\Delta_c$.

So now we carry out this programme. First, as $G_{\pm}(x)\in
[1/9,9]$, $|\log^m G_{\pm}(x)|\leq 2^m \log^{m} 3$, and thus
$|H_{\xi,n+4,\pm}(x)|\leq 9^{\xi} 2^{n+4} \log^{n+4} 3\leq 81
\times 2^{n+4} \times (1.09861...)^{n+4}$, so setting
$\de_{n+1}=0.05$ with $n=7$ we find $\|R_7\|_\infty\leq 81 \times
8 \times (1.09861...)^{11} / 8! = 9 \times (1.09861...)^{11} /560
\approx 9 \times 2.8137... /560 \approx 0.04522... <\de_8=0.046$.

Now we must set $\de_0,\dots,\de_7$. The goal is that the termwise
error \eqref{eq:djoverbarcriteria} would not exceed $\de_j$, which
will be guaranteed by $N_j$ step Riemann sum approximation of the
two integrals defining $d^{(j+3)}(3/2)$ with prescribed error
$\eta_j$ each. Therefore, we set $\eta_j:=\de_j j!2^j/2$, and note
that

\begin{equation}\label{eq:Njchoice}
N_j > N_j^{\star}:=\sqrt{\frac{\|H^{''}_{3/2,j+3,\pm}\|_\infty}{192 \eta_j}} = \sqrt{\frac{\|H^{''}_{3/2,j+3,\pm}\|_\infty}{192 j! 2^{j-1} \de_j}}
\end{equation}
 suffices.
That is, we must estimate $\|H^{''}_{3/2,j,\pm}\|_\infty$ for
$j=3,\dots,10$ and thus find appropriate values of $N_j^{\star}$.

\begin{lemma}\label{l:H"norm} For $j=3,\dots,10$ we have the following numerical estimates for the values of $\|H^{''}_{3/2,j,\pm}\|_\infty$.
\begin{table}[H]
\caption{Estimated values of $\|H^{''}_{3/2,j,\pm}\|_\infty$ for $j=3,\dots,10$.}
\label{table:Hdoubleprimenorm}
\begin{center}
\begin{tabular}{cc}
$j$ \qquad & estimate for $\|H^{''}_{3/2,j,\pm}\|_\infty$ \\
3 & 195,745 \\
4 & 560,366 \\
5 & 1,577,686 \\
6 & 4,228,176 \\
7 & 11,254,403 \\
8 & 29,470,592 \\
9 & 76,110,084 \\
10 & 194,242,755 \\
\end{tabular}
\end{center}
\end{table}
\end{lemma}

\begin{proof}
\vbox{
Whether we consider $H_+$ or $H_-$, the range of $G_{\pm}$ stays in $[1/9,9]$, so
$$
\|H^{''}_{\pm}\|_\infty \leq \max\{\max_A |H^{''}_{3/2,j,\pm}|,~ \max_B |H^{''}_{3/2,j,\pm}| \}
$$
with $A:=\{x\in[0,1/2]~:~ G_{\pm}(x)\leq 1\}$ and $B:=\{x\in[0,1/2]~:~ G_{\pm}(x)>1\}$. Recall that from \eqref{eq:Hdoubleprimegeneral} we find for arbitrary $j\geq 3$ and with $G=G_{\pm}$
\begin{align*}
H^{''}_{3/2,j,\pm}= G''\sqrt{G} \log^{j-1}G \left(\frac32\log G +j\right) + \frac{G'^2}{\sqrt{G}} \log^{j-2}G\left(\frac34\log^2 G +2j \log G + j(j-1)\right).
\end{align*}
Since we have no control over the sign of $G''$, we now estimate trivially -- using \eqref{eq:Gpmnorm} -- as
\begin{align}\label{eq:HbyQ}
|H^{''}_{\pm}(x)| & \leq \|G''\| \sqrt{G(x)} \left|\log^{j-1}G(x) \right| \left|\frac32\log G(x) +j\right| + Q(x) \notag \\ &\left( \textrm{with} \quad Q(x):=\frac{G'^2(x)}{\sqrt{G(x)}} \left|\log^{j-2}G(x)\right|\left|\frac34\log^2 G(x) +2j \log G(x) + j(j-1)\right| \right) \notag \\ &\leq 112 \pi^2 3  \log^{j-1}9 (j+\log 27)+ Q(x).
\end{align}
}

Now by the two estimates of $|G'(x)|$ from \eqref{eq:Gpmnorm} $24 \pi$ and from \eqref{eq:GprimecompareG} $\sqrt{1300 G(x)}$ it follows that
$$
Q(x)\leq \begin{cases} 1300 G(x)^{1/2} \left|\log^{j-2}G(x)\right|\left|\frac34\log^2 G(x) +2j \log G(x) + j(j-1)\right| & x\in A \\ 576 \pi^2 G(x)^{-1/2} \log^{j-2}G(x) \left(\frac34\log^2 G(x) +2j \log G(x) + j(j-1)\right)  & x\in B
\end{cases}.
$$
Now observe that here $Q$ is estimated by functions of $G(x)$, so we can look for maximization or good estimates on the range of $G$. For $x\in A$ denote $u:=- \log G(x)$: then the condition $x\in A$ means that $0\leq u < \log 9$, while for $x\in B$ the substitution $u:=\log G(x)$ leads to $0<u \leq \log 9$. In all we find $\| Q\|_\infty \leq \max_{0\leq u \leq \log 9} \{\psi(u),~\varphi(u) \}$ with
$$
\psi(u):=1300 e^{-u/2} u^{j-2} \left| \frac34 u^2 - 2j u + j(j-1)\right|
$$
and
$$
\varphi(u) :=576 \pi^2 e^{-u/2} u^{j-2} \left( \frac34 u^2 + 2j u + j(j-1)\right)
$$
Now it is easy to observe that for any real $u\geq 0$ we have $| \frac34 u^2 - 2j u + j(j-1)| \leq \frac34 u^2 + 2j u + j(j-1)$ whence in view of $1300<576 \pi^2$, necessarily $\psi(u)\leq \varphi(u)$.

In all, $\|Q\|_\infty \leq \max_{0\leq u \leq \log 9} \varphi(u)$.
With a slight change of variable $v:=u/2$, we look for $576\pi^2
2^{j-2} \max_{[0,\log 3]} e^{-v}( 3v^j+4jv^{j-1}+j(j-1)v^{j-2})$.
The derivative of the function to be maximized is
$e^{-v}v^{j-3}p(v)$, where $p(v):=-3v^3-jv^2+3j(j-1)v
+j(j-1)(j-2)$. The first part is positive, and $p(v)$ is concave,
since $p'' (v)=-18v-2j <0$. Note that the concave function $p(v)$
starts with positive values as $p(0)=j(j-1)(j-2)>0$, and at
$+\infty$ it becomes negative, so $p(v)$ has at most 1 root, where
$p$ changes from positive values to negative ones. Consequently,
for any $j=3,4,...10$ the function $e^{-v}(
3v^j+4jv^{j-1}+j(j-1)v^{j-2})$ vanishes and increases at 0, then
it stays positive and tends to 0 at infinity, with one strict
maximum point in $(0,\infty)$ (at the single critical point where
its derivative vanishes); moreover, it is easy to see that it is
increasing for all $j\geq 3$ in the whole interval $[0,\log 3]$,
as there $p(v)$ stays positive. Therefore, the maximum is attained
at the right endpoint $v=\log 3$ of the interval, with maximum
values 94,948.95..., 303,717.77..., 916,480.8...,
2,649,475.04...,7,412,491.18..., 20,209,150.39...,
53,959,116.72..., 141,613,801.4 for $j=3,\dots,10$, respectively.
Adding $112 \pi^2\times 3  \log^{j-1}9 (j+\log 27)$ we get from
\eqref{eq:HbyQ} the numerical estimates of Table
\ref{table:Hdoubleprimenorm}.

\end{proof}


\begin{lemma}\label{l:d3at32} Set $\de_j$ as $\de_0=0.12$, $\de_1=0.040$, $\de_2=0.015$ $\de_3=0.004$, $\de_4=0.003$, $\de_5=\de_6=\de_7=0.001$. Then the approximate Riemann sums of order $N_j$ yield the approximate values $\overline{d}_j$ as listed in Table \ref{t:Deltas}, admitting the error estimates \eqref{eq:djoverbarcriteria} for $j=0,\dots,7$. Furthermore, $\|R_{7}(d^{'''},t)\|_{\infty} <0.046=:\de_8$ and thus with the approximate Taylor polynomial $P_7(t)$ defined in \eqref{eq:Pndef} the approximation $|d^{'''}(t)-P_7(t)|<\de$ holds uniformly for $1\leq t \leq 2$.

\begin{table}[H]
\caption{The chosen values of $\de_j$, $\eta_j$, the appropriate $N_j$ and approximate Taylor coefficients}
\label{t:Deltas}
\begin{center}
\begin{tabular}{|c|c|c|c|c|c|}
$j$ & $\de_j$ & $\eta_j$ & $N_j$ & $\overline{d}_j$\\
0 & 0.05 & 0.025 & 202 & -2.1079\\
1 & 0.0604 & 0.0604 & 220 & -7.4098\\
2 & 0.044 & 0.176 & 215 & -21.8002\\
3 & 0.02 & 0.48 & 215 & -57.3657\\
4 & 0.008 &  1.536 & 196 & -143.9192\\
5 & 0.002 &  3.84 & 200 & -345.8081\\
6 & 0.0004 &  9.216 & 208 & -815.0515\\
7 & 0.0002 &  64.512 & 126 & -1879.3248\\
\end{tabular}
\end{center}
\end{table}
\end{lemma}

\begin{proof} Applying the estimation of $Q(x)$ in \eqref{eq:HbyQ} we obtain the values as shown in the table. As $\sum_{j=0}^{7}\de_j=0.185$, adding $\de_8=0.046$ we get $\de=0.231$. The found values of the $N_j$s do not exceed 220.
\end{proof}

Our aim is to prove

\begin{lemma}\label{l:dthreeprime} We have $d'''(t)<0$ for all $1\leq t \leq 2$.
\end{lemma}
\begin{proof} We approximate $d'''(t)$ by the polynomial $P_7(t)$ constructed in \eqref{eq:Pndef} as the approximate value of the order 7 Taylor polynomial of $d'''$ around $t_0:=3/2$. As the error is at most $\de$, it suffices to show that $p(t):=P_7(t)+\de<0$ in $[1,2]$. Now $P_7(1)=-0.23233...$ so $P_7(1)+\de<0$. Moreover, $p'(t)=P_7'(t)=\sum_{j=1}^{7} \dfrac{\overline{d}_j}{(j-1)!} (t-3/2)^{j-1}$ and $p'(1)=-1.411144746<0$. From the explicit formula of $p(t)$ we consecutively compute also $p''(1)=-5.536080671<0$, $p'''(1)=-16.54595998<0$ and $p^{(4)}(1)=-33.74395576<0$.

Finally, we arrive at $p^{(5)}(t)=\overline{d}_5+\overline{d}_6(t-3/2)+ (\overline{d}_7/2) (t-3/2)^{2}$. We have already checked that $p^{(j)}(1)<0$ for $j=0,1,2,3,4$, so in order to conclude $p(t)<0$ for $1\leq t\leq 2$ it suffices to show $p^{(5)}(t)<0$ in the given interval. However, the leading coefficient of $p^{(5)}$ is negative, while it is easy to see that the discriminant $\Delta:=\overline{d}_6^2-2\overline{d}_5 \overline{d}_7$ of $p^{(5)}$ is negative, too: $\Delta=-1,935,234.161$. Therefore, the whole parabola of the graph of $p^{(5)}$ lies below the $x$-axis, and so $p^{(5)}(t)<0$ for all $t \in \RR$.  It follows that also $p(t)<0$ for all $t\geq 1$. \end{proof}

And this finally proves the $k=1$ case of Conjecture \ref{conj:con3} as explained in the beginning of the section. \end{proof}


\section{The case $k=2$ of conjecture \ref{conj:con3}}\label{sec:k2case}
To show the $k=2$ case of Conjecture \ref{conj:con3} it suffices to prove Proposition \ref{prop:k2x} below.

\begin{proposition}\label{prop:k2x} Let
$F_{\pm}(x):=1+ e(x)\pm e(4x)$ and consider the $p^{\rm th}$ marginal
integrals $f_{\pm}(p):=\int_0^1 |F_{\pm}(x)|^p dx$ as well as their difference $\Delta(p):=f_{-}(p)-f_{+}(p)=\int_0^1 |F_{-}(x)|^p-|F_{+}(x)|^p dx$. Then for all $p\in(4,6)$, $\Delta(p)>0$.
\end{proposition}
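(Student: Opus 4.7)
The strategy mirrors that of Proposition~\ref{prop:k1x} with the interval $[1,2]$ replaced by $[2,3]$. Set $t:=p/2\in(2,3)$, $G_\pm(x):=|F_\pm(x)|^2$, $g_\pm(t):=\int_0^{1/2}G_\pm^t(x)\,dx$, and $d(t):=g_-(t)-g_+(t)$. For each $n\in\NN$ the Fourier coefficients of $F_\pm^n$ have absolute values independent of the $\pm$ sign, so by Parseval $\int_0^1|F_-|^{2n}=\int_0^1|F_+|^{2n}$; at $n=2$ both sides equal $15$ and at $n=3$ both equal $93$, hence $d(2)=d(3)=0$. The plan is to prove two quantitative inputs, $d'(2)>0$ and $d'''(t)<0$ for all $t\in[2,3]$. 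Together these force the claim: $d'$ is strictly concave, so it has at most two zeros; two zeros in $(2,3]$ would force $d'(2)<0$ by concavity, while no zero in $(2,3]$ would render $d$ strictly monotonic and contradict $d(2)=d(3)=0$; hence $d'$ has a unique zero $\tau\in(2,3)$, $d$ first increases then decreases, and $d\ge\min\{d(2),d(3)\}=0$ on $[2,3]$ with strict positivity in the interior.

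The arithmetic inputs are assembled just as in Section~\ref{sec:k1}. I will expand
\[G_\pm(x)=3+2\cos(2\pi x)\pm 2\cos(6\pi x)\pm 2\cos(8\pi x)\]
and, via the Chebyshev identities $T_3(u)=4u^3-3u$ and $T_4(u)=8u^4-8u^2+1$, rewrite $G_\pm$ as a degree-four polynomial in $u:=\cos(2\pi x)$; factoring $\sin(2\pi x)$ out of $G_\pm'$ then presents $G^{\prime 2}_\pm/G_\pm$ and $G''_\pm/G_\pm$ as rational functions of $u\in[-1,1]$ in the style of \eqref{eq:GprimesquareperG} and \eqref{eq:GdoubleprimesquareperG}. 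Numerical maximization on $[-1,1]$ will yield $\min G_\pm>0$ and constants $C_1,C_2$ with $G^{\prime 2}_\pm\le C_1 G_\pm$ and $|G''_\pm|\le C_2 G_\pm$, alongside the trivial bounds $\|G_\pm\|_\infty\le 9$, $\|G'_\pm\|_\infty\le 32\pi$, $\|G''_\pm\|_\infty\le 208\pi^2$. With these in hand, positivity of $d'(2)$ follows by the Lemma~\ref{l:diffder1benpoz} recipe: a preliminary tabulation of $d'(2)$ fixes a positive target, the bound on $\|(G^2\log G)''\|_\infty$ dictates a Riemann-sum step count via \eqref{eq:Riemann}, and the computed approximation dominates the combined truncation plus computer-arithmetic error \eqref{eq:Comperror}. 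The bound $d'''(t)<0$ on $[2,3]$ will follow from the Taylor protocol \eqref{eq:d3Taylor}--\eqref{eq:Pndef} centered at $t_0:=5/2$: fix $n$ and partial errors summing to $\delta$; estimate the tail through $\|H_{\xi,n+4,\pm}\|_\infty\le 9^\xi 2^{n+4}\log^{n+4}9$ for $\xi\in[2,3]$; compute each $\overline{d}_j$ by a Riemann sum with step count $N_j$ read off from $\|H''_{5/2,j+3,\pm}\|_\infty$ (the analogue of Lemma~\ref{l:H"norm}); and confirm $p(t):=P_n(t)+\delta<0$ on $[2,3]$ by checking the signs of $p^{(k)}(t_0)$ for small $k$ together with a discriminant condition on the top-order tail, exactly as in Lemma~\ref{l:dthreeprime}.

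The hard part is purely quantitative. Two features of the $k=2$ setting make the numerics stiffer than in the $k=1$ case: the highest frequency in $G_\pm$ is $4$ rather than $3$, so $\|G'_\pm\|_\infty$, $\|G''_\pm\|_\infty$, $C_1$ and $C_2$ all grow, and the Taylor-tail factor $9^\xi$ now ranges up to $9^3=729$, an order of magnitude larger than in the $\xi\in[1,2]$ regime. Consequently the Taylor order $n$ and the Riemann step counts $N_j$ must both be enlarged, and the computer-arithmetic analysis around \eqref{eq:Comperror} rerun with the wider $t$-range and the updated extremal values of $G_\pm$. The one genuine risk is that $d'(2)$ could be too small to survive the necessary error budget; the preliminary numerical value will tell us whether the chosen $\delta$ leaves a comfortable margin, and if not the Taylor order must be raised accordingly at the cost of heavier tables.
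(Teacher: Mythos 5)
Your plan transplants the $k=1$ strategy verbatim, in particular the claim that $d'''(t)<0$ throughout the relevant interval (now $[2,3]$). That is the one step that does not carry over. For $k=2$ the third derivative $d'''$ does \emph{not} stay negative on $[2,3]$: the author's own remarks (Section~\ref{sec:final}) state that the constant-sign phenomenon ``$d^{(j)}(t)<0$'' holds ``at least for $j=k+2$,'' i.e.\ $j=4$ when $k=2$, and the proof of Proposition~\ref{prop:k2x} in the paper explicitly adds a second endpoint check, $d''(2)>0$, which would be redundant if $d'''$ were negative on all of $[2,3]$. If you ran your Taylor protocol for $d'''$ centered at $5/2$, the endpoint evaluation $P_n(2)+\delta$ would come out positive and you would have no way to close the argument; this is not an error-budget problem that a larger $n$ fixes, it is a false premise.

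The paper's actual route is a three-step variant of the same idea: prove $d'(2)>0$ (Lemma~\ref{l:diffder2benpoz}), $d''(2)>0$ (Lemma~\ref{l:diff2der2benpoz}), and $d^{(4)}(t)<0$ on $[2,3]$ (Lemma~\ref{l:dthreeprime2}), the last via exactly the Taylor machinery you describe but applied to $d^{(4)}$ centered at $5/2$ with order $n=7$. The logical glue then changes slightly: $d^{(4)}<0$ makes $d''$ (not $d'$) concave; since $d'(2)>0$ and Rolle gives $d'(\tau)=0$ for some $\tau\in(2,3)$, Lagrange forces $d''$ to take a negative value, and a concave $d''$ that starts positive at $2$ must cross zero exactly once at some $\xi$; then $d'$ increases on $[2,\xi]$, decreases on $[\xi,3]$, and $\int_2^3 d'=0$ with $d'(2)>0$ gives a unique sign change of $d'$, hence the bump shape and $d>0$ on $(2,3)$. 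Everything else in your outline—the Chebyshev reduction to polynomials in $u=\cos 2\pi x$, the bounds $G'^2_\pm\le C_1 G_\pm$ and analogous $G''$ control, the Riemann-sum step counts dictated by $\|H''_{5/2,j,\pm}\|_\infty$, the discriminant trick on the tail of the approximate Taylor polynomial, and the computer-arithmetic error accounting around \eqref{eq:Comperror}—is the right toolbox, and your diagnosis that the $9^\xi$ factor and the higher frequencies make the $k=2$ numerics stiffer is correct. (Minor slip: the Taylor-tail bound should read $9^\xi\log^{m}9=9^\xi 2^{m}\log^{m}3$, not $9^\xi 2^{m}\log^{m}9$, and one must also verify that the competing contribution from $G_\pm\ge 1/16$, namely $(1/16)^\xi\log^m 16$, is dominated—this is why the paper checks $m\le 40$.) But without replacing ``$d'''<0$'' by ``$d''(2)>0$ together with $d^{(4)}<0$,'' the plan cannot be completed.
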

\begin{proof}
As before, we put $t:=p/2\in[2,3]$ and use the notations of \eqref{eq:Gpmdef} and \eqref{eq:ddef}.
So we are to prove that $d(t)>0$ for $2<t<3$. By Parseval's formula $d(2)=d(3)=0$, now $\int_0^{1/2} G_{\pm}^2(x) dx=\int_0^1 F_{\pm}^4(x) dx=1+4+1+4+4+1=15$, and $\int_0^{1/2} G_{\pm}^3(x) dx=\int_0^1 F_{\pm}^6(x) dx=1+9+9+1+9+36+9+9+9+1=93$.

Our strategy in proving $d(t)>0$ now consists of three steps: first we prove that $d'(2)>0$, then that $d''(2)>0$, and finally that $d''$ is concave in $[2,3]$, i.e. that $d^{(4)}<0$. Since $d(2)=d(3)=0$, in view of Rolle's theorem $d'$ takes 0 at say $\tau\in (2,3)$. Since $d'(2)>0$, and $d'(\tau)=0$, in view of Lagrange's theorem $d''$ must assume some negative value. But as $d''$ is concave and $d''(2)>0$, it changes from positive to negative at a point say $\xi$. It follows that $d'$ monotonically increases between $[2, \xi]$ -- where it takes only positive values -- and then from the maximum $d'(\xi)$ it decreases between $[\xi, 3]$. As the total integral $\int_2^3 d' =0$, $d'$ eventually takes negative values, too. So $d'$ has an unique root $\theta$ in $[\xi, 3]$ and $d'$ is positive in $[\xi,\theta)$ and negative in $(\theta,3]$. So $d$ increases in $[2,\theta]$ and decreases in $[\theta,3]$ thus proving that $d(2)=d(3)=0$ are the minima of $d$ and $d>0$ in $(2,3)$.
\begin{lemma}\label{l:diffder2benpoz} We have $d'(2)>0$.
\end{lemma}
\begin{remark} By numerical calculation now $d'(2)\approx 0.03411...$ \end{remark}
Now $x$-derivatives of $G(x)$ are
\begin{align}\label{eq:GpmdefFirst2}
G_{\pm}(x)&=3+2\{\cos(2\pi x)\pm \cos(6\pi x)\pm \cos(8\pi x)\}\\
G_{\pm}^{'}(x)&=-4\pi \sin(2\pi x)\mp 12 \pi \sin(6\pi x)\mp 16 \pi \sin(8\pi x))\\
G_{\pm}^{''}(x)&=-8\pi ^2 \cos(2\pi x)\mp 72 \pi ^2 \cos(6\pi x)\mp 128 \pi ^2 \cos(8\pi x)).
\end{align}
Also we find the trivial termwise estimates
\begin{equation}\label{eq:Gpmnorm2}
\|G_{\pm}\|_\infty \leq 9, \qquad \| G_{\pm}^{'}\|_\infty \leq 32 \pi, \qquad \|G_{\pm}^{''}\|_\infty \leq 208 \pi^2 .
\end{equation}
The bound on $\| G'_{\pm}\|_{\infty}$ can slightly be improved taking into account the occurring cancellation. Namely, $G'_{\pm}(x)=-4\pi\sin(2\pi x)\left[1\pm(9-12\sin^2(2\pi x))\pm16\sqrt{1-\sin^2(2\pi x)}(1-2\sin^2(2\pi x))\right]$, so putting $v:=\sin(2\pi x)$ yields $\|G'_{\pm}\|_{\infty} \leq \max_{-1\leq v\leq 1} 4\pi |v[1\pm(9-12v^2)\pm16\sqrt{1-v^2}(1-2v^2)]|$.
Separating the cases of $G_{+}$ and $G_{-}$ and writing $w:=v^2$ we find
\begin{align*}
\|G'_{+}\|_{\infty}& \leq 8 \pi\max_{0\leq w\leq 1} \sqrt{w}||5-6w|\pm 8 \sqrt{1-w}|1-2w||\\&=8\pi\max_{0\leq w\leq 1} \sqrt{w}[|5-6w|+8\sqrt{1-w}|1-2w|]=8\pi 3.6301...
\end{align*}
and
\begin{align*}
\|G'_{+}\|_{\infty}& \leq 16 \pi\max_{0\leq w\leq 1} \sqrt{w}||2-3w|\pm 4 \sqrt{1-w}|1-2w||\\&=8\pi\max_{0\leq w\leq 1} \sqrt{w}[|2-3w|+4\sqrt{1-w}|1-2w|]=16\pi 1.6405...
\end{align*}
so in all
\begin{equation}\label{eq:improvedG'bound}
\|G'_{\pm}\|_{\infty} \leq 29.12\pi.
\end{equation}
On the other hand numerical calculation immediately gives
\begin{equation}\label{eq:Gpmminimum2}
\min_{\TT} G_{+} \approx 0.27... >1/4 \qquad {\textrm{and}}\qquad \min_{\TT} G_{-} \approx 0.063... > 1/16~\,.
\end{equation}
\
Using the notation $v:=2\pi x$ and $u:=\cos v$ as in case k=1
\begin{align*}
G'^2_{\pm}(x)&=(4\pi)^2 (\sin v\pm 3 \sin 3v \pm 4 \sin 4v)^2\\ &=(4\pi \sin v)^2 \left[1\pm  6 \cos^2 v \pm (16\cos v +3)( 2\cos^2v-1) \right]^2 \\ &= 16\pi^2(1-u^2)\left[(1\mp 3 \mp 16u \pm 12 u^2 \pm 32 u^3)\right]^2\\ &= \begin{cases} 64\pi^2 (1-u^2)(16 u^3 + 6 u^2 -8 u -1)^2 & (G=G_+)\\ 256\pi^2 (1-u^2)(-8u^3 - 3u^2+4u+1)^2 &(G=G_-)\end{cases}.
\end{align*}
On the other hand
\begin{align*}
G_{\pm}(x)&=3+2\cos v \pm 2 \cos 3v \pm 2 \cos 4v \\
&= 3+2\cos v \pm 2 (4 \cos^3 v -3 \cos v + 8 \cos^4 v - 8 \cos^2 v + 1 ) \\ &= 3 + 2 u \pm 2(8 u^4 + 4 u^3 - 8 u^2 -3 u +1)
\\& =\begin{cases} 16u^4+8u^3-16u^2-4u+5 & (G=G_+)\\ -16u^4-8u^3+16u^2+8u+1 & (G=G_-)\end{cases}.
\end{align*}
Therefore,
\begin{equation}\label{eq:GprimesquareperG2}
\frac{G'^2_{\pm}}{G_{\pm}}(x)=\begin{cases} 64\pi^2 \frac{(1-u^2)(16 u^3 + 6 u^2 -8 u -1)^2}{16u^4+8u^3-16u^2-4u+5} & (G=G_+)\\ 256\pi^2 \frac{(1-u^2)(-8u^3-3u^2+4u+1)^2}{-16u^4-8u^3+16u^2+8u+1} & (G=G_-)\end{cases}.
\end{equation}

Numerically maximizing the modulus of these rational functions in the range $u\in[-1,1]$ we obtain
\begin{equation}\label{eq:GprimecompareG2}
G'^2_{+} (x) < 2300 G_{+}(x) \qquad {\textrm{and}}\qquad G'^2_{-} (x) < 2600 G_{-}(x) .
\end{equation}

Furthermore, we analyze the function $G'' G$.
\begin{align*}
G^{''}_{\pm}(x)&=-8\pi^2(\cos v \pm 9 \cos 3v \pm 16 \cos 4v )\\
&= -8\pi^2(\cos v \pm 9 (4 \cos^3 v -3 \cos v) \pm 16(8 \cos^4 v - 8 \cos^2 v + 1 )) \\ &= -8\pi^2( u \pm 128 u^4 \pm 36 u^3 \mp 128 u^2 \mp 27u \pm 16)
\\& =\begin{cases} 128u^4+36u^3-128u^2-26u+16 & (G=G_+)\\ -128u^4-36u^3+128u^2+28u-16 & (G=G_-)\end{cases}.
\end{align*}
Therefore,
\begin{equation}\label{eq:GsecondG }
G^{''}_{\pm}G_{\pm}(x)=-16\pi^2\begin{cases} (64u^4+18u^3-64u^2-13u+8)(16u^4+8u^3-16u^2-4u+5) & (G=G_+)\\ (-64u^4-18u^3+64u^2+14u-8)(-16u^4-8u^3+16u^2+8u+1) & (G=G_-)\end{cases}.
\end{equation}
Numerically maximizing and minimizing the modulus of these functions in the range $u\in[-1,1]$ we obtain
\begin{align}\label{eq:GsecondcompareG}
  &\max_{-1\leq u\leq 1} G^{''}_{+}G_{+}(x)) < 2820{\textrm{ ,}} \qquad \min_{-1\leq u\leq 1} G^{''}_{+}G_{+}(x)) > -18500 \qquad {\textrm{and}} \\
  &\notag \max_{-1\leq u\leq 1} G^{''}_{-}G_{-}(x)) < 2710{\textrm{ ,}} \qquad \min_{-1\leq u\leq 1} G^{''}_{-}G_{-}(x)) > -14800 ,
  \qquad {\textrm{so}} \\
  &\notag \max_{-1\leq u\leq 1} G^{''}_{\pm}G_{\pm}(x)) < 2820{\textrm{ ,}} \qquad \min_{-1\leq u\leq 1} G^{''}_{\pm}G_{\pm}(x)) > -18500 .
\end{align}

\comment{We compare $G''$ and $G$, too.
\begin{align*}
G^{''}_{\pm}(x)&=-8\pi^2 (\cos v\pm 9 \cos 3v \pm 16 \cos 4v) \\ &=-8 \pi^2 \left[\cos v \pm 9 ( 4\cos^3 v-3 \cos v) \pm  16 (8\cos^4 v -8\cos^2 v +1) \right] \\ &= -8 \pi^2\left[(u \pm 128 u^4 \pm 36 u^3 \mp 128 u^2 \mp 27 u \pm 16)\right] \\ &=\begin{cases} -16\pi^2 (64 u^4+18 u^3-64u^2-13 u +8) & (G=G_+)\\ 32 \pi^2 ( 32 u^4+9u^3 - 32u^2-7u+4) & (G=G_-)\end{cases}.
\end{align*}

For the quotient we now obtain
\begin{equation}\label{eq:GdoubleprimesquareperG}
\frac{G^{''}_{\pm}}{G_{\pm}}(x)=\begin{cases} -16 \pi^2 \frac{64 u^4+18 u^3-64u^2-13 u +8}{16u^4+8u^3-16u^2-4u+5} & (G=G_+)\\ 32 \pi^2 \frac{32 u^4+9u^3-32u^2-7u+4}{-16u^4-8u^3+16u^2+8u+1} & (G=G_-)\end{cases}.
\end{equation}
So finally numerical computation yields
\begin{equation}\label{eq:GdoubleprimecompareG}
|G^{''}_{+} (x)| < 30000 G_{+}(x) \qquad {\textrm{and}}\qquad |G^{''}_{-} (x)| < 82000 G_{-}(x) .
\end{equation}
}

From \eqref{eq:Hdoubleprimegeneral} with $t=2$, $j=1$ and
estimating the norm using \eqref{eq:Gpmnorm2},
\eqref{eq:improvedG'bound}, \eqref{eq:Gpmminimum2} and
\eqref{eq:GprimecompareG2} gives
\begin{align}\label{eq:Hdoubleprimespecial2}
\|H(x)\|_\infty:&=\|H^{''}_{2,1,\pm}\|_\infty \notag \\ & =\|G''(x) G(x)\left\{2 \log G(x)+1 \right\} ~+ G'^2(x) \left\{ 2 \log G(x)  + 3 \right\}\|_{\infty} \notag \\&<\begin{cases} 2820 (2\log9 + 1) + 848 \pi^2(2\log9+3) \approx 77100... & 1\leq G(x), G''(x)>0\\ \max\{18500(2\log9+1), 848 \pi^2(2\log9+3)\}\\ < \max\{99800, 61900\}= 99800 & 1\leq G(x), G''(x)\leq0\\ 208 \pi^2(2\log{16} - 1) + 2600\cdot (2\log16 - 3) \approx  16~000...& G(x)<1 \end{cases} \\ & < 99800.\notag
\end{align}
It follows that in the numerical integration formula \eqref{eq:Riemann} the step number should be chosen to satisfy $99800/(192N^2)<0.017$, that is $N>\approx 174.86...$ i.e. $N\geq 175$. Thus the Riemann sums with $N\geq 175$ nodes will provide errors less than 0.017 in each of the two integrals $g'_{\pm}(2)$, whence the total error of $d'(2)=g_-'(2)-g_+'(2)$ must lie below 0.034 in modulus. Now the standard numerical calculation of the Rieman sums $g'_{\pm}(2)$ yields the approximate value $d'(2)\approx 0.03411...$, which exceeds $0.034$, hence the lemma is proved.
\end{proof}

\begin{lemma}\label{l:diff2der2benpoz} We have $d''(2)>0$.
\end{lemma}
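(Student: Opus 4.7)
The plan is to follow exactly the template of Lemma \ref{l:diffder2benpoz}, promoting the first-derivative analysis to a second-derivative one. Writing $d''(2) = g_-''(2) - g_+''(2)$, formula \eqref{eq:djintegral} gives
$$g_\pm''(2) = \int_0^{1/2} G_\pm^2(x)\log^2 G_\pm(x)\, dx,$$
so that $d''(2) = \int_0^{1/2} \bigl[G_-^2 \log^2 G_- - G_+^2 \log^2 G_+\bigr](x)\,dx$. A preliminary Riemann-sum calculation should first pin down the approximate size of $d''(2)$; this number sets the total permissible error and hence the required node count $N$.

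Next I would apply the general second-derivative formula \eqref{eq:Hdoubleprimegeneral} with $t = j = 2$ to the integrand $H_{2,2,\pm}(x) = G_\pm^2(x)\log^2 G_\pm(x)$, obtaining
$$H_{2,2,\pm}''(x) = 2\,G''(x)G(x)\log G(x)\bigl(\log G(x)+1\bigr) + 2\,G'^2(x)\bigl(\log^2 G(x) + 3\log G(x) + 1\bigr).$$
To bound $\|H_{2,2,\pm}''\|_\infty$ I would feed in the already-established estimates \eqref{eq:Gpmnorm2}, \eqref{eq:improvedG'bound}, \eqref{eq:Gpmminimum2}, \eqref{eq:GprimecompareG2}, and crucially the two-sided product estimate \eqref{eq:GsecondcompareG} for $G''G$, which avoids the wasteful multiplication of the separate sup-norms of $G''$ and $G$. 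Since $G_\pm \in [1/16,9]$ we have $|\log G_\pm| \leq \log 16 < 2.77$, so the logarithmic coefficients $\log G(\log G+1)$ and $\log^2 G + 3\log G + 1$ are trivially controlled.

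With a bound on $\|H_{2,2,\pm}''\|_\infty$ in hand, I would distribute a total error budget slightly below the pre-computed value of $d''(2)$ between the two integrals $g_\pm''(2)$, apply the Riemann sum formula \eqref{eq:Riemann}, and solve for a sufficient $N$. The residual computational (floating-point) error is handled uniformly by the analysis surrounding \eqref{eq:Comperror}, whose $j = 2$ case already leaves ample margin. Finally, one carries out the actual Riemann sum evaluation of $g_\pm''(2)$ and verifies that the numerical value of the difference exceeds the worst-case error, just as at the end of the proof of Lemma \ref{l:diffder2benpoz}.

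The main obstacle I anticipate is that $d''(2)$ may turn out uncomfortably small: given that $d'(2) \approx 0.0341$ is itself modest and the second logarithmic factor amplifies the integrand's oscillation, a naive estimate of $\|H_{2,2,\pm}''\|_\infty$ could force $N$ into the thousands and still leave the margin too tight. The remedy is the same case-split sharpening used in \eqref{eq:Hdoubleprimespecial2} for the first-derivative lemma, namely splitting according to $G \gtrless 1$ and the sign of $G''$, exploiting cancellation between the first term (small when $|\log G|$ is small) and the second (small when $G$ is near the local extremes of $G_\pm$). This should bring $\|H_{2,2,\pm}''\|_\infty$ down to the order of $10^5$ and $N$ back to a few hundred, fitting the same computational envelope as the preceding lemma.
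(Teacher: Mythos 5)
Your proposal is correct and matches the paper's proof essentially step for step: you invoke \eqref{eq:djintegral} to express $d''(2)$ as a difference of integrals, apply \eqref{eq:Hdoubleprimegeneral} with $t=j=2$ (your rewriting $2G''G\log G(\log G+1)+2G'^2(\log^2 G+3\log G+1)$ is the same expression as the paper's), bound $\|H_{2,2,\pm}''\|_\infty$ via the case split on $G\gtrless 1$ and the sign of $G''$ using \eqref{eq:GsecondcompareG}, \eqref{eq:GprimecompareG2}, \eqref{eq:Gpmnorm2}, \eqref{eq:Gpmminimum2}, then choose $N$ from \eqref{eq:Riemann} to beat the margin, exactly as in the proof of Lemma \ref{l:diffder2benpoz}. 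Your anticipated magnitudes are right too (the paper gets $\|H''\|_\infty<2.6\times10^5$ and $N\geq145$); the only minor miscalibration is that $d''(2)\approx 0.138$ is in fact \emph{larger} than $d'(2)\approx 0.034$, so the margin is more comfortable, not tighter, than in the preceding lemma.
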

\begin{remark} By numerical calculation now $d''(2)\approx 0.13757...$ \end{remark}
\begin{proof}
Now the formula \eqref{eq:Hdoubleprimegeneral} with $t=2$, $j=2$ takes the form
\begin{align}\label{eq:Hdoubleprimespecial22}
\|H''(x)& \|_\infty:=\|H^{''}_{2,2,\pm}\|_\infty \notag \\  &=\| G''(x) G(x) \log G(x)\left\{2 \log G(x)+2 \right\} ~+ G'^2(x) \left\{ 2 \log^2 G(x)  + 6 \log G(x) + 2\right\}\|_{\infty}  \notag \\&<\begin{cases} 2820\log9 (2\log9 + 2) \\+ 848 \pi^2(2\log^2 9+6\log9+2)) \approx 248000... & 1\leq G(x), G''(x)>0\\ \max\{18500\log9(2\log9+2), 848 \pi^2(2\log ^2 9+6\log9 +2)\}\\< \max\{260000, 208000\}= 260000 & 1\leq G(x), G''(x)\leq0\\ 208 \pi^2\log16(2\log{16} - 2) \\ + 2600\cdot (2\log^2 16 - 6\log16+2) \approx  27800...& G(x)<1 \end{cases} \\ & < 260000.\notag
\end{align}
In the numerical integration formula \eqref{eq:Riemann} the step number could be chosen to satisfy $\dfrac{260000}{192N^2}<0.065$, that is $N \approx 144.34 \dots$ i.e. $N\geq 145$.

The Riemann sums of the form \eqref{eq:Riemann} with $N\geq 145$ nodes will provide errors less than 0.065 in each of the two integrals $g'_{\pm}(2)$, whence the total error of $d''(2)=g_-''(2)-g_+''(2)$ must lie below 0.13. Now the standard numerical calculation of the Rieman sums $g''_{\pm}(2)$ yields the approximate value $d''(2)\approx 0.13757...$, which is well over $0.13$, hence the lemma is proved.
\end{proof}

Now we start the computation of an approximate Taylor polynomial of $d^{(4)}$.

Numerical tabulation of values give that $d^{(4)}$ is decreasing from $d^{(4)}(2)\approx -0.79041...$ to even more negative values as $t$ increases from 2 to 3. Thus our goal is to set $n\in \NN$ and $\delta_j>0$, ($j=0,\dots,n+1)$ suitably so that in the Taylor expansion
\begin{equation}\label{eq:d3Taylor_2}
d^{(4)}(t)=\sum_{j=0}^n \frac{d^{(j+4)}(\frac52)}{j!}\left(t-\frac52\right)^j +R_{n}(d^{(4)},t),\qquad
R_{n}(d^{(4)},t):=\frac{d^{(n+5)}(\xi)}{(n+1)!}\left(\xi-\frac52\right)^{n+1}
\end{equation}
the standard error estimate
\begin{equation}\label{eq:Rd4t}
|R_n(d^{(4)},t)| \leq \frac{\max_{2\leq \xi\leq 3} \|H_{\xi,n+5,+}\|_\infty + \max_{2\leq \xi\leq 3} \|H_{\xi,n+5,-}\|_\infty}{(n+1)! 2^{n+2}},
\end{equation}
calculated as in \eqref{eq:Rd3t}, provides the appropriately small error $\|R_n(d^{(4)},\cdot)\|_\infty <\delta_{n+1}$, while with appropriate approximation $\overline{d}_j$ of $d^{(j+4)}(5/2)$,
\begin{equation}\label{eq:djoverbarcriteria_2}
\left\|\frac{d^{(j+4)}(\frac52)-\overline{d}_j}{j!}\left(t-\frac52\right)^j\right\|_\infty =\frac{\left|d^{(j+4)}(\frac52)-\overline{d}_j\right|}{2^j j!}< \delta_j\qquad (j=0,1,\dots,n).
\end{equation}
Naturally, we wish to choose $n$ and the partial errors $\delta_j$ so that $\sum_{j=0}^{n+1}\de_j <\de:=0.79$, say, so that $d^{(4)}(t)< P_n(t) +\de$ with
\begin{equation}\label{eq:Pndef_2}
P_n(t):=\sum_{j=0}^n \frac{\overline{d}_j}{j!}\left(t-\frac52\right)^j.
\end{equation}
Here again we get the approximate values $\dbj$ by Riemann sums numerical integration of the integrals defining $d^{(4)}(5/2)$.

As before, for an estimation of the error we use the first formula of \eqref{eq:Riemann} with $N_j\in\NN$ steps, where $N_j$ are chosen in function of a prescribed approximation error $\eta_j$, which in turn will be set in function of the choice of $\de_j$.

So now we carry out the calculations. First, as $G_{\pm}(x)\in [1/16,9]$, $|G^{\xi}_{\pm}(x)\log^m G_{\pm}(x)|\leq \max_{[1/16,9]} |u^{\xi}\log^{m}u |$ For $m\geq 9$ the derivative of this function vanishes only at $u=1$, where the function itself vanishes, so the absolute maximum is
$$
\max\left\{\left(\dfrac{1}{16}\right)^{\xi}4^m \log^m 2,9^{\xi}2^m\log^m 3\right\}=9^\xi2^m\log^m3 \qquad \textrm{for all}\,\,  m\leq 40.
$$
In all, $\|H_{\xi,n+5,\pm}(x)\|_{\infty} \leq 9^{\xi} 2^{n+5} \log^{n+5}3 \leq 273 \cdot 2^{n+5} \log^{n+5}3$ for all $2 \leq \xi\leq 3$ and $4\leq n \leq 35$. In view of \eqref{eq:Rd4t} this yields $|R_n(d^{(4)},t)|\leq \dfrac{4368 \log^{n+5}3}{(n+1)!}<0.34$ for $n=7$.

Now we must set $\de_0,\dots,\de_7$. The goal is that the termwise error \eqref{eq:djoverbarcriteria_2} would not exceed $\de_j$, which will be guaranteed by $N_j$ step Riemann sum approximation of the two integrals defining $d^{(j+4)}(5/2)$ in \eqref{eq:djintegral}, with prescribed error $\eta_j$ each. Therefore, we set $\eta_j:=\de_j j!2^j/2$ and note that
\begin{equation}\label{eq:Njchoicek2}
N_j > N_j^{\star}:=\sqrt{\frac{\|H^{''}_{5/2,j+4,\pm}\|_\infty}{192 \eta_j}} = \sqrt{\frac{\|H^{''}_{5/2,j+4,\pm}\|_\infty}{192 j! 2^{j-1} \de_j}}
\end{equation}
 suffices.
That is, we must estimate $\|H^{''}_{5/2,j,\pm}\|_\infty$ for $j=4,\dots,11$ and thus find appropriate $N_j^{\star}$ values.

\begin{lemma}\label{l:H"norm2} For $j=4,\dots,11$ we have the following numerical estimates for the values of $\|H^{''}_{5/2,j,\pm}\|_\infty$.
\end{lemma}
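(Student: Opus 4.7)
The plan is to follow closely the strategy of Lemma~\ref{l:H"norm}, adapted from $t=3/2$ to $t=5/2$ and using the auxiliary bounds established earlier in this section. Specializing formula \eqref{eq:Hdoubleprimegeneral} to $t=5/2$ with $G=G_\pm$ yields
\[
H''_{5/2,j,\pm} = G''\,G^{3/2}\log^{j-1}G\bigl(\tfrac{5}{2}\log G + j\bigr) + G'^2\,G^{1/2}\log^{j-2}G\bigl(\tfrac{15}{4}\log^2 G + 4j\log G + j(j-1)\bigr).
\]
Since both $t-1=3/2$ and $t-2=1/2$ are positive, every power of $G$ that appears is non-negative, which simplifies matters relative to the $k=1$ case where $G^{-1/2}$ had to be handled separately on $\{G\le 1\}$.

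First I would split $[0,1/2]$ into $A:=\{x:G(x)\le 1\}$ and $B:=\{x:G(x)>1\}$, substitute $u:=|\log G(x)|$, and use \eqref{eq:Gpmnorm2} and \eqref{eq:Gpmminimum2} to confine $u$ to $[0,\log 16]$ on $A$ (for $G=G_-$) and to $[0,\log 9]$ on $B$. The first summand above I would bound crudely by $\|G''\|_\infty \le 208\pi^2$ from \eqref{eq:Gpmnorm2}, together with $G^{3/2}\le 27$ on $B$; the second via \eqref{eq:GprimecompareG2}, which rewrites as $G'^2 G^{1/2} \le 2600\, G^{3/2}$. Applying the triangle inequality to the polynomials in $\log G$, these estimates reduce the problem, for each $j$ and each sign $c\in\{+1,-1\}$ (corresponding to $B$ and $A$ respectively), to maximizing on the relevant range of $u$ the function
\[
\psi_{j,c}(u) := 208\pi^2\, e^{3cu/2}\, u^{j-1}\bigl(\tfrac{5}{2}u + j\bigr) + 2600\, e^{3cu/2}\, u^{j-2}\bigl(\tfrac{15}{4}u^2 + 4ju + j(j-1)\bigr).
\]
Since $e^{3u/2}$ dominates $e^{-3u/2}$, the case $c=+1$ on $[0,\log 9]$ will govern the final estimate.

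Next I would verify that $\psi_{j,+1}$ is monotone increasing on $[0,\log 9]$ for each $j\in\{4,\ldots,11\}$, so that its supremum is attained at the right endpoint $u=\log 9$. As in the proof of Lemma~\ref{l:H"norm}, this reduces to checking positivity of a polynomial factor in $\psi_{j,+1}'$; since the interior critical points of $u^m e^{3u/2}$ occur at $u=-2m/3<0$ for $m\ge 0$, the exponential-times-monomial part is already increasing on $[0,\infty)$, and one only needs to control the remaining quadratic factors in $u$, which is elementary for $j\ge 4$. Evaluating $\psi_{j,+1}(\log 9)$ for $j=4,\ldots,11$ then yields the numerical values tabulated in the lemma.

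The main obstacle is bookkeeping rather than substance: making sure each termwise bound is a genuine upper estimate (not merely a good approximation), verifying the monotonicity of $\psi_{j,+1}$ uniformly in $j$, and carrying out the numerical substitutions accurately. Once the table of $\|H''_{5/2,j,\pm}\|_\infty$ is assembled it feeds directly into the choice of step counts $N_j^\star$ via \eqref{eq:Njchoicek2}, and ultimately into the Taylor-polynomial approximation of $d^{(4)}$ on $[2,3]$.
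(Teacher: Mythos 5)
Your decomposition into $A=\{G\le 1\}$ and $B=\{G>1\}$, the substitution $u=|\log G|$, and the observation that all powers of $G$ in $H''_{5/2,j,\pm}$ are nonnegative all match the paper. But there is a concrete gap in the way you bound the term $G'^2\sqrt{G}(\cdots)$: you use $G'^2\le 2600\,G$ from \eqref{eq:GprimecompareG2} \emph{uniformly}, which converts the second summand into $2600\,G^{3/2}\,u^{j-2}(\cdots)$, i.e.\ a factor $2600\,e^{3u/2}$. On $B$, where $G$ runs up to $9$, this gives $2600\cdot 27 = 70200$ as the leading constant. The paper instead switches bounds according to the side: on $A$ it uses $G'^2\le 2600\,G$ (so $G'^2\sqrt{G}\le 2600\,G^{3/2}$ with $G\le 1$), but on $B$ it uses the flat bound $\|G'\|_\infty\le 32\pi$ from \eqref{eq:Gpmnorm2}, yielding $G'^2\sqrt{G}\le 1024\pi^2\,G^{1/2}\le 1024\pi^2\cdot 3\approx 30318$. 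That is about $2.3$ times smaller than your bound, and the gap propagates: your $\psi_{j,+1}(\log 9)$ would come out roughly $1.5$--$2$ times larger than the tabulated entries (e.g.\ for $j=4$ you would get on the order of $28\times 10^6$ rather than the stated $16\times 10^6$). So while your argument proves \emph{some} upper bound, it does not establish the specific numbers in Table \ref{table:Hdoubleprimenorm2}, which are exactly what Lemma \ref{l:d3at52} relies on to fix the step counts $N_j$.

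A secondary omission: you assert that the $c=+1$ case on $[0,\log 9]$ ``will govern the final estimate'' because $e^{3u/2}\ge e^{-3u/2}$, but the $c=-1$ side runs over the longer interval $[0,\log 16]$ (since $G_-$ can dip to $1/16$), so pointwise domination on the shorter interval does not by itself close the argument. The paper addresses this explicitly: it shows $\psi\le\varphi$ on $[0,\log 9]$ \emph{and} separately bounds $\psi$ on $[\log 9,\log 16]$ by a constant $S$, then verifies $S<\varphi(\log 9)$ by comparing $1024\pi^2\cdot 3\cdot\log^m 9$ against $(2600/27)\log^m 16$ termwise. You would need an analogous check for your $\psi_{j,-1}$ on $[\log 9,\log 16]$ before concluding that the endpoint value at $u=\log 9$ (with $c=+1$) is the global maximum.
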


\begin{table}[H]
\caption{Estimated values of $\|H^{''}_{5/2,j,\pm}\|_\infty$ for $j=4,\dots,11$.}
\label{table:Hdoubleprimenorm2}
\begin{center}
\begin{tabular}{cc}
$j$ \qquad & estimate for $\|H^{''}_{5/2,j,\pm}\|_\infty$ \\
4 &  16,000,000 \\
5 & 40,000,000 \\
6 & 104,000,000\\
7 & 267,000,000 \\
8 & 680,000,000 \\
9 & 1,705,000,000 \\
10 & 4,255,000,000 \\
11 & 10,600,000,000 \\
\end{tabular}
\end{center}
\end{table}

\begin{proof} Whether we consider $H_+$ or $H_-$, the range of $G_{\pm}$ stays in $[1/16,9]$, so
$$
\|H^{''}_{\pm}\|_\infty \leq \max\{\max_A |H^{''}_{5/2,j,\pm}|,~ \max_B |H^{''}_{5/2,j,\pm}| \}
$$
with $A:=\{x\in[0,1/2]~:~ G_{\pm}(x)\leq 1\}$ and $B:=\{x\in[0,1/2]~:~ G_{\pm}(x)>1\}$. Recall that from \eqref{eq:Hdoubleprimegeneral} we find for arbitrary $j\geq 4$ and with $G=G_{\pm}$
\begin{align*}
H^{''}_{5/2,j,\pm}= G'' G^{3/2} \log^{j-1}G \left(\frac52\log G +j\right) + G'^2\sqrt{G} \log^{j-2}G\left(\frac{15}{4}\log^2 G +4 j \log G + j(j-1)\right).
\end{align*}
Since we have no control over the sign of $G''$, we now estimate trivially -- using \eqref{eq:Gpmnorm2} -- as

\begin{align}\label{eq:HbyQk2}
|H^{''}_{\pm}(x)| & \leq \|G''\| G^{3/2}(x) \left|\log^{j-1}G(x) \right| \left|\frac52\log G(x) +j\right| + Q(x) \notag \\ &\left( \textrm{with} \quad Q(x):=G'^2(x)\sqrt{G(x)} \left|\log^{j-2}G(x)\right|\left|\frac{15}4\log^2 G(x) +4j \log G(x) + j(j-1)\right| \right) \notag \\ &\leq 208 \pi^2 27  \log^{j-1}9 (j+\log 243)+ Q(x).
\end{align}
Now by the two estimates of $|G'(x)|$ from \eqref{eq:Gpmnorm2} $32 \pi$ and from \eqref{eq:GprimecompareG2} $\sqrt{2600 G(x)}$ it follows that
$$
Q(x)\leq \begin{cases} 2600 G(x)^{3/2} \left|\log^{j-2}G(x)\right|\left|\frac{15}4\log^2 G(x) +4j \log G(x) + j(j-1)\right| & x\in A \\ 1024 \pi^2 G(x)^{1/2} \log^{j-2}G(x) \left(\frac{15}4\log^2 G(x) +4j \log G(x) + j(j-1)\right)  & x\in B
\end{cases}.
$$
Now observe that here $Q$ is estimated by functions of $G(x)$, so we can look for maximization or good estimates on the range of $G$. For $x\in A$ denote $u:=- \log G(x)$: then the condition $x\in A$ means that $0\leq u < \log 16$, while for $x\in B$ the substitution $u:=\log G(x)$ leads to $0<u \leq \log 9$.

In all we find with
$$
\psi(u):=2600 e^{-\frac32 u} u^{j-2} \left| \frac{15}4 u^2 - 4j u + j(j-1)\right|
$$
and
$$
\varphi(u) :=1024 \pi^2 e^{u/2} u^{j-2} \left( \frac{15}4 u^2 + 4j u + j(j-1)\right)
$$
that
\begin{align*}
\| Q\|_\infty & \leq \max\left\{ \max_{0\leq u \leq \log 16} \psi(u), \max_{0\leq u \leq \log 9} \varphi(u) \right\}\\ &= \max\left\{ \max_{\log 9 \leq u \leq \log 16} \psi(u), \max_{0\leq u \leq \log 9} \max\{\psi(u),\varphi(u)\} \right\}.
\end{align*}
Now it is clear that for any real $u\geq 0$ we have $| \frac{15}4 u^2 - 4j u + j(j-1)| \leq \frac{15}4 u^2 + 4ju + j(j-1)$ and $e^{u/2}>e^{-\frac32 u}$, whence in view of $2600<1024 \pi^2$, necessarily $\psi(u)\leq \varphi(u)$. Therefore, in the $[0,\log 9]$ interval $\max\{\psi(u),\varphi(u)\}=\varphi(u)$. Observe that $\varphi$ is increasing in $[0,\infty)$, whence
$$
\max_{0\leq u \leq \log 9} \varphi(u)=\varphi(\log 9)= 1024\pi^2 \cdot 3 \left\{\frac{15}{4}\log^j9 + 4j\log^{j-1} 9 + j(j-1)\log^{j-2} 9 \right\}.
$$
Turning to the expression with $\psi$, trivially estimating it gives
$$
\psi(u)\leq S:=2600\cdot \dfrac1{27} \cdot \left\{ \dfrac{15}{4}\log^{j} 16 + 4j \log^{j-1} 16 + j(j-1)\log^{j-2} 16 \right\}.
$$
Comparing termwise, we easily see that $1024\pi^2\times3\times \log^m 9 \geq (2600/27)\times \log^m 16$ for $m=j-2$, $m=j-1$ and $m=j$ and $j=4,\dots,11$, that is, for $m=2,\dots,11$.
Indeed, this is equivalent to $1024\pi^2\times 27/2600 \geq (\log 16 /\log 9) ^m$, that is $m \log \left(\dfrac{\log 4}{\log 3}\right) \leq \log \{1024\pi^2\times 27/2600\}$, which holds true even up to $m=20$. Therefore, for $j=4,\dots,11\,$ $S< \varphi(\log 3)$.

In all, $\|Q\|_\infty \leq \varphi(\log 3) = 1024\pi^2 \cdot 3 \left\{\frac{15}{4}\log^j9 + 4j\log^{j-1} 9 + j(j-1)\log^{j-2} 9 \right\}$.

So for $j=4,5,6,7,8,9,10,11$ we get the values 9,552,472...,
26,388,587.4..., 71,259,267.7..., 188,851,956.3...,
492,698,473.2..., 1,268,392,131.6..., 3,228,178,830...,
8,134,909,871.3... respectively for $\|Q\|_\infty$. Adding
$208\pi^2\times27\log^{j-1}9(j+\log 243)$, \eqref{eq:HbyQk2}
yields the listed values of Table \ref{table:Hdoubleprimenorm2}.
\end{proof}

\begin{lemma}\label{l:d3at52} Set $\de_j$ as $\de_0=0.3$, $\de_1=0.08$, $\de_2=0.03$, $\de_3=\de_4= \de_5=0.01$, $\de_6=\de_7=0.005$. Then the approximate Riemann sums of order $N_j$ yield the approximate values $\overline{d}_j$ as listed in Table \ref{t:Deltas_2}, admitting the error estimates \eqref{eq:djoverbarcriteria_2} for $j=0,\dots,7$. Furthermore, $\|R_{7}(d^{(4)},t)\|_{\infty} <0.34=:\de_8$ and thus with the approximate Taylor polynomial $P_7(t)$ defined in \eqref{eq:Pndef_2} the approximation $|d^{(4)}(t)-P_7(t)|<\de=0.79$ holds uniformly for $2\leq t \leq 3$.
\end{lemma}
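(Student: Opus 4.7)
The plan is to mirror the proof of Lemma \ref{l:d3at32} verbatim, adapted to the $k=2$ setting. Three ingredients combine. First, I would verify the total error budget: $\sum_{j=0}^{7}\de_j = 0.3+0.08+0.03+3\cdot 0.01+2\cdot 0.005 = 0.45$, and adjoining $\de_8 = 0.34$ gives $\de = 0.79$ as claimed.

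Second, for each $j=0,\ldots,7$ I set $\eta_j := \de_j\, j!\,2^j/2$ and apply \eqref{eq:Njchoicek2}, plugging in the $\|H^{''}_{5/2,j+4,\pm}\|_\infty$ bounds from Table \ref{table:Hdoubleprimenorm2} to produce sufficient $N_j^{\star}$; one then sets $N_j := \lceil N_j^{\star} \rceil$. A quick arithmetic check shows the values remain at most a few thousand across all $j$, and actually decrease once $j$ passes the middle range, since the factorial-exponential growth of $j!\,2^j$ in $\eta_j$ overtakes the growth of $\|H''_{5/2,j+4,\pm}\|_\infty$ seen in Table \ref{table:Hdoubleprimenorm2}. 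Executing the Riemann sum computation of the two integrals in \eqref{eq:djintegral} at $t=5/2$ with $N_j$ nodes produces the tabulated values $\overline{d}_j$. By \eqref{eq:Riemann} each of the two Riemann sums is in error by at most $\eta_j$, whence the total error satisfies $|d^{(j+4)}(5/2)-\overline{d}_j| \leq 2\eta_j = \de_j\, j!\,2^j$; since $|t-5/2|^j \leq 2^{-j}$ on $[2,3]$, this is precisely the bound \eqref{eq:djoverbarcriteria_2}.

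Third, the Taylor remainder bound $\|R_7(d^{(4)},\cdot)\|_\infty < 0.34 = \de_8$ is already established in the paragraph immediately preceding the lemma, via \eqref{eq:Rd4t} and the estimate $\|H_{\xi,12,\pm}\|_\infty \leq 273\cdot 2^{12}\log^{12}3$, giving $4368\log^{12}3/8!<0.34$. Summing the termwise errors in the Taylor expansion \eqref{eq:d3Taylor_2} then yields $|d^{(4)}(t)-P_7(t)| < \sum_{j=0}^{8}\de_j = 0.79 = \de$ uniformly on $[2,3]$.

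The only potential obstacle is computational feasibility of the Riemann sums, but the $N_j$ remain modest (worst case on the order of a couple of thousand), so this is unproblematic; the floating-point error in evaluating the integrand has already been absorbed into the slack via the detailed analysis around \eqref{eq:Comperror}, where a relative error bound of $10^{-4}$ against $\eta_j$ was verified. The proof is thus essentially an exercise in bookkeeping: verify the error budget, compute the $N_j^{\star}$ from Table \ref{table:Hdoubleprimenorm2} via \eqref{eq:Njchoicek2}, and run the numerical integration to tabulate the coefficients $\overline{d}_j$ entering Table \ref{t:Deltas_2}.
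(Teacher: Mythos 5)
Your proposal is correct and follows essentially the same bookkeeping argument the paper uses: sum the $\de_j$ to $0.45$, adjoin $\de_8=0.34$ from the remainder estimate preceding the lemma, derive $\eta_j$ and $N_j$ from \eqref{eq:Njchoicek2} and Table \ref{table:Hdoubleprimenorm2}, and invoke the $\Delta_c$ analysis around \eqref{eq:Comperror} to absorb floating-point error. One small thing worth noticing (a quirk of the paper itself rather than a gap in your argument): the $\de_j$ listed in the lemma statement differ from those in Table \ref{t:Deltas_2}, though both sets sum to $0.45$; the $\eta_j$ and $N_j$ in the table are computed from the table's $\de_j$, not the lemma's, so your derived $N_j^{\star}$ will not reproduce the tabulated $N_j$ exactly unless you use the table's values.
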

\begin{table}[H]
\caption{The chosen values of $\de_j$, $\eta_j$, the appropriate $N_j$ and approximate Taylor coefficients}
\label{t:Deltas_2}
\begin{center}
\begin{tabular}{|c|c|c|c|c|c|}
$j$ & $\de_j$ & $\eta_j$ & $N_j$ & $\overline{d}_j$\\
0 & 0.13 &  0.065 & 1102 & -8.4790\\
1 & 0.15 &  0.150 & 1178 & -31.5452\\
2 & 0.1 & 0.4 & 1164 & -99.8194\\
3 & 0.05 & 1.2 & 1077 & -287.2717\\
4 & 0.015 & 2.88 & 1107 & -776.5678\\
5 & 0.004 &  7.68 & 1076 & -2010.9552\\
6 & 0.0008 &  18.432 & 1097 & -5043.6133\\
7 & 0.0002 &  64.512 & 923 & -12356.378\\
\end{tabular}
\end{center}
\end{table}

\begin{proof} Applying the estimation of $Q(x)$ in \eqref{eq:HbyQk2} we obtain the values as shown in the table. As $\sum_{j=0}^{7}\de_j=0.45$, adding $\de_8=0.34$ we get $\de=0.79$. The found values of the $N_j$s do not exceed 1200.
\end{proof}

Our aim is to prove
\begin{lemma}\label{l:dthreeprime2} We have $d^{(4)}(t)<0$ for all $2\leq t \leq 3$.
\end{lemma}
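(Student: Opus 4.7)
The plan mirrors the proof of Lemma~\ref{l:dthreeprime}. By Lemma~\ref{l:d3at52} we have the uniform approximation $|d^{(4)}(t)-P_7(t)|<\delta=0.79$ on $[2,3]$, so it suffices to prove that the explicit polynomial $p(t):=P_7(t)+\delta$ satisfies $p(t)\leq 0$ throughout $[2,3]$.

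First I would evaluate $p$ and its first four derivatives at the left endpoint $t=2$ using the approximate Taylor coefficients $\overline{d}_j$ from Table~\ref{t:Deltas_2}, via the closed form
\begin{equation*}
p^{(j)}(2)=\sum_{k=j}^{7}\frac{\overline{d}_k}{(k-j)!}\left(-\tfrac12\right)^{k-j},\qquad j=0,1,2,3,4,
\end{equation*}
and verify that each of $p(2),p'(2),p''(2),p'''(2),p^{(4)}(2)$ is strictly negative. These are five finite arithmetic checks, and since all eight $\overline{d}_k$ are negative, with magnitudes growing roughly geometrically, the alternating-sign contributions fall into a predictable pattern.

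Next I would control the highest nontrivial derivative $p^{(5)}(t)=\overline{d}_5+\overline{d}_6(t-5/2)+\tfrac{\overline{d}_7}{2}(t-5/2)^2$ on the entire line. As $\overline{d}_7/2<0$, this quadratic opens downward, so it suffices to verify that its discriminant $\Delta:=\overline{d}_6^2-2\overline{d}_5\overline{d}_7$ is negative. That will hold because $\overline{d}_5$ and $\overline{d}_7$ are both large negative numbers whose product dominates $\overline{d}_6^2$, forcing $p^{(5)}(t)<0$ for every $t\in\RR$. The conclusion then follows by a five-step cascade: $p^{(5)}<0$ on $\RR$ makes $p^{(4)}$ strictly decreasing, so combined with $p^{(4)}(2)<0$ we obtain $p^{(4)}<0$ on $[2,\infty)$; iterating, $p^{(3)},p'',p',p$ are each strictly decreasing on $[2,\infty)$ and negative at $t=2$, hence negative throughout $[2,\infty)$, giving $d^{(4)}(t)<p(t)\leq 0$ on $[2,3]$.

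The main obstacle will be the \emph{tightness} of the numerical margin at $t=2$. Since $d^{(4)}(2)\approx-0.79041$ and $\delta=0.79$, the cushion for $p(2)<0$ is only on the order of $10^{-3}$, so the Riemann step-counts $N_j$ in Table~\ref{t:Deltas_2} (up to $N_0=1102$) must be precisely calibrated; any significant loosening of $\delta_0,\dots,\delta_7$ would erode the sign in $p(2)$. By contrast, the verifications $p^{(j)}(2)<0$ for $j=1,2,3,4$ and the discriminant inequality on $p^{(5)}$ will enjoy much larger comfort zones, so once the delicate endpoint check is secured the cascade closes with room to spare.
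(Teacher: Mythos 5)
Your proposal is correct and follows exactly the same strategy as the paper: evaluate $p=P_7+\delta$ and its derivatives at the endpoint $t=2$, show $p^{(5)}$ is a downward parabola with negative discriminant so $p^{(5)}<0$ everywhere, and cascade down to conclude $p<0$ on $[2,\infty)$. Your observation about the tight margin at $t=2$ (the paper reports $P_7(2)\approx -0.79075$ against $\delta=0.79$, leaving a cushion of order $10^{-3}$) is accurate and matches the care taken in the paper's choice of step counts $N_j$.
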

\begin{proof} We approximate $d^{(4)}(t)$ by the polynomial $P_7(t)$ constructed in \eqref{eq:Pndef_2} as the approximate value of the order 7 Taylor polynomial of $d^{(4)}$ around $t_0:=5/2$. As the error is at most $\de$, it suffices to show that $p(t):=P_7(t)+\de<0$ in $[2,3]$. Now $P_7(2)=-0.79075...$ so $P_7(2)+\de<0$. Moreover, $p'(t)=P_7'(t)=\sum_{j=1}^{7} \dfrac{\overline{d}_j}{(j-1)!} (t-5/2)^{j-1}$ and $p'(2)=-5.557576563... <0$. From the explicit formula of $p(t)$ we consecutively compute also $p''(2)=-21.27623445...<0$, $p'''(2)=-77.45997012...<0$ and $p^{(4)}(2)=-144.1173211...<0$.

Finally, we arrive at $p^{(5)}(t)=\overline{d}_5+\overline{d}_6(t-5/2)+ (\overline{d}_7/2) (t-5/2)^{2}$. We have already checked that $p^{(j)}(2)<0$ for $j=0,1,2,3,4$, so in order to conclude $p(t)<0$ for $2\leq t\leq 3$ it suffices to show $p^{(5)}(t)<0$ in the given interval. However, the leading coefficient of $p^{(5)}$ is negative, while it is easy to see that the discriminant $\Delta:=\overline{d}_6^2-2\overline{d}_5 \overline{d}_7$ of $p^{(5)}$ is negative, too: $\Delta\approx -24,258,211$. Therefore, the whole parabola of the graph of $p^{(5)}$ lies below the $x$-axis, and so $p^{(5)}(t)<0$ for all $t \in \RR$.  It follows that also $p(t)<0$ for all $t\geq 2$. \end{proof}

And this finally proves the $k=2$ case of Conjecture \ref{conj:con3} as explained in the beginning of the section.

\section{Final remarks}\label{sec:final}

We have encountered no theoretical difficulties in calculating the above cases, and it seems that a similar numerical analysis should work even for larger $k$. In case the errors and step numbers would grow, we could as well apply Taylor expansion around more points, say around $t_0:=k+1/4$ and $s_0:=k+3/4$, which reduces the radius from $1/2$ to $1/4$. So in principle a numerical analysis is possible.

Numerical tabulation of the functions $d(t)$ in various ranges $[k,k+1]$ have led to similar pictures for $k=3,4,...13$. We tabulated the difference function $d(t)=d_k(t)$ for $k$ up to 13, and found the difference function to be positive in all cases. That of course suggests that Conjecture \ref{conj:con3} holds true.

When writing $t=k+s$, where now $0\leq s\leq 1$, and after normalizing say by the maximum value, it seems that the shapes of $f_k:=f_k(s):=d(k+s)/\max_{[k,k+1]} d$ approach a fine mathematical curve, something quite resembling to a reflected log-normal distribution density function shape, having maximum somewhere at $s_0\approx 0.85$. Perhaps the limit distribution, i.e. $f(s):=\lim_{k\to \infty} f_k(s)$ can be found, and thus at least in the limit we can derive positivity of the function $d(t)$.

Computation of Taylor coefficients at the center-points, that is derivatives of the difference function $d(t)$ at $t=k+1/2$ led to the unexpected finding that the Taylor coefficients $d_j:=d^{(k+2+j)}(k+1/2)$ of $d^{(k+2)}$ remained of constant negative sign. Without deriving precise error estimates, we continued the calculation of the approximative value $\overline{d}_j$ of these Taylor coefficients for various further $j$ and for some higher $k$, finding in all studied cases that $\overline{d_j}<0$. Also, the phenomenon, which helped us to execute theoretically precise proofs, that $d^{(j)}(t)<0$ for some $j=j(k)$, seems to remain in effect also for higher $k$ and at least for $j=k+2$. A theoretically precise proof of these facts would ease considerably the proof of validity of Conjecture \ref{conj:con3}.

Also we tested the "Hardy-Littlewood case" of Conjecture \ref{conj:con3}, that is,
$t=k+1/2$, i.e. $p=2k+1$, which was the original example of Hardy and Littlewood in case $k=1$. Up to $k=14$, we found positive, though decreasing numerical values. However, it is quite strange that the integrals of $G_{\pm}^t$ increase (close to $10^{11}$ when $k=13$), yet the found difference is smaller and smaller (of the order $10^{-3}$ when $k$ reaches 13). The relative size of the difference is thus found to be some $10^{-15}$ times the size of the individual integrals, which suggests that choice of the step size ($10^{-3}$ in our case) in the Riemann sum and errors in the computation of the respective integrals amount much higher quantities than the found values of the difference. Clearly when coming closer say to the left endpoint $t=k$, the difference can be even smaller. Therefore, these numerical experiments are far from mathematically reliable.

\begin{figure}
\centering
\includegraphics[width=8cm]{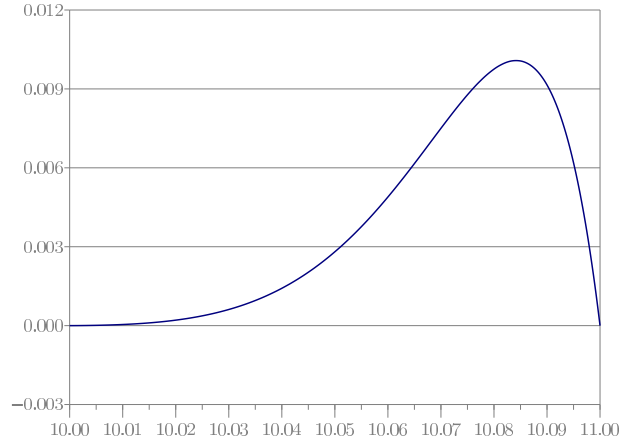}
\caption{A typical graph of $d(t)$: here $k=10$, $10\leq t\leq 11$. The step size used in the Riemann sums is 0.001, the tabulation is with density 0.01.}
\end{figure}


\end{document}